\newtheorem{theorem}{Theorem}[section]
\newtheorem{corollary}[theorem]{Corollary}
\newtheorem{lemma}[theorem]{Lemma}
\newtheorem{proposition}[theorem]{Proposition}
\theoremstyle{definition}
\newtheorem{definition}[theorem]{Definition}
\newtheorem{remark}[theorem]{Remark}
\newtheorem{example}[theorem]{Example}
\numberwithin{equation}{section}
\title{Some remarks on the chord index}
\author{Zhiyun Cheng}
\author{Hongzhu Gao}
\author{Mengjian Xu}
\address{School of Mathematical Sciences, Laboratory of Mathematics and Complex Systems, Beijing Normal University, Beijing 100875, China}
\email{czy@bnu.edu.cn}
\email{hzgao@bnu.edu.cn}
\email{xmjmath@mail.bnu.edu.cn}
\subjclass[2010]{57M25, 57M27}
\keywords{virtual knot; flat virtual knot; chord index; writhe polynomial}
\begin{document}
\begin{abstract}
In this paper we discuss how to define a chord index via smoothing a real crossing point of a virtual knot diagram. Several polynomial invariants of virtual knots and links can be recovered from this general construction. We also explain how to extend this construction from virtual knots to flat virtual knots.
\end{abstract}
\maketitle
\section{Introduction}
Suppose we are given a knot diagram $K$, it is easy to count the number of crossing points of $K$. However, this integer $c(K)$ is not a knot invariant due to the first and second Reidemeister moves. Fixing an orientation of $K$, then each crossing point contributes $+1$ or $-1$ to the writhe of $K$. Now the writhe, denoted by $w(K)$, is preserved by the second Reidemeister move but not the first Reidemeister move. As a natural idea, if we can assign an index (or a weight) to each crossing point which satisfies some required conditions, then it is possible that the weighted sum of crossing points gives rise to a knot invariant. One example of this kind of knot invariant is the quandle cocycle invariant introduced by J. S. Carter et al. in \cite{Car2003}. Roughly speaking, let $X$ be a finite quandle and $A$ an abelian group. For a fixed coloring $\mathcal{C}$ of $K$ by $Q$ and a given 2-cocycle $\phi\in\mathbf{Z}^2(X;A)$ one can associate a (Boltzmann) weight $B(\tau;\mathcal{C})$ to each crossing point $\tau$. This (Boltzmann) weight $B(\tau;\mathcal{C})$ has the following properties:
\begin{enumerate}
\item The weight of the crossing point involved in the first Reidemeister move is the identity element;
\item The weights of the two crossing points involved in the second Reidemeister move are equivalent;
\item The product of the weights of the three crossing points involved in the third Reidemeister move is invariant under the third Reidemeister move.
\end{enumerate}

Now let us extend the scope of our discussion from classical knots to virtual knots. The precise definition of virtual knots can be found in Section \ref{section2}. For virtual knots, motivated by the properties above we also want to find a suitable definition for the (chord) index. The following chord index axioms were proposed by the first author in \cite{Che2016}, which can be regarded as a generalization of the parity axioms introduced by Manturov in \cite{Man2010}.

\begin{definition}\label{axiom}
Assume for each real crossing point $c$ of a virtual link diagram, according to some rules we can assign an index (e.g. an integer, a polynomial, a group etc.) to it. We say this index satisfies the \emph{chord index axioms} if it satisfies the following requirements:
\begin{enumerate}
\item The real crossing point involved in $\Omega_1$ has a fixed index (with respect to a fixed virtual link);
\item The two real crossing points involved in $\Omega_2$ have the same indices;
\item The indices of the three real crossing points involved in $\Omega_3$ are preserved under $\Omega_3$ respectively;
\item The index of the real crossing point involved in $\Omega_3^v$ is preserved under $\Omega_3^v$;
\item The index of any real crossing point not involved in a generalized Reidemeister move is preserved under this move.
\end{enumerate}
\end{definition}

One can easily find that our third condition is sharper than the 2-cocycle condition in quandle cohomology theory. Thus, some indices satisfying the our chord index axioms can be regarded as special cases of the quandle 2-cocycle. Actually, in \cite{Che2016} we defined a chord index for each real crossing point of a virtual link diagram with the help of a given finite biquandle. This construction extends some known invariants (for example the writhe polynomial defined in \cite{Che2013}) from virtual knots to virtual links. It is still unknown whether this construction can be used to define some nontrivial indices on classical knot diagrams.

However, the definition of the (Boltzmann) weight mentioned above is not intrinsic. Actually, it depends on the choices of a finite quandle $Q$ and a 2-cocycle $\phi$. Similarly, the chord index discussed in \cite{Che2016} is not intrinsic. It depends on the choice of a finite biquandle.

The main aim of this paper is trying to introduce some intrinsic chord indices which satisfy the chord index axioms defined in Definition \ref{axiom}. Roughly speaking, for each real crossing point $c$ of a virtual knot diagram $K$, we associate a flat virtual knot/link (see Section \ref{section2} for the definition) to it. This flat virtual knot/link is simply obtained from $K$ by smoothing the crossing point $c$. It turns out that this flat virtual knot/link satisfies the chord index axioms. Let $\mathcal{M}_1^u$ be the free $\mathbf{Z}$-module generated by the set of all unoriented flat virtual knots. In other words, any element of $\mathcal{M}_1^u$ has the form $\sum\limits_in_i\widetilde{K}_i$, where $\widetilde{K}_i$ is a unoriented flat virtual knot and $n_i\neq0$ for only a finite number of $n_i\in\mathbf{Z}$. Similarly, we use $\mathcal{M}_2$ to denote the free $\mathbf{Z}$-module generated by the set of all oriented 2-component flat virtual links. Then by using the flat virtual knot/link-valued chord index we introduce two new virtual knot invariants, which take values in $\mathcal{M}_1^u$ and $\mathcal{M}_2$ respectively. The readers are recommended to compare these invariants with some other flat virtual knots/graphs-valued virtual knot invariants. For example, the polynomial $\nabla(K)$ \cite{Tur2004} which takes values in the polynomial algebra generated by nontrivial flat virtual knots, or the $sl(3)$ invariant introduced in \cite{Kau2014}, which is valued in a module whose generators are graphs.

Many ideas of this paper have their predecessors in the literature. We will explain how to recover some known invariants from our two invariants in detail.

It seems a little odd that a virtual knot invariant takes values in a module generated by flat virtual knots/links, since flat virtual knots/links themselves are not easy to distinguish. To this end, we extend our idea to flat virtual knots and define two flat virtual knot invariants taking values in $\mathcal{M}_1$ and $\mathcal{M}_2$ respectively. Here $\mathcal{M}_1$ can be regarded as the oriented version of $\mathcal{M}_1^u$, the reader is referred to Section \ref{section5} for the definition. The key point is that each nontrivial flat virtual knot will be mapped to a linear combination of some flat virtual knots/links which are ``simpler" than the flat virtual knot under consideration. Several properties of these invariants will be discussed in Section \ref{section5}.

Throughout this paper we will often abuse our notation, letting $K$ refer to a (virtual) knot diagram and the (virtual) knot itself. Let us begin our journey with a quick review of virtual knots and flat virtual knots.

\section{A quick review of virtual knots and flat virtual knots}\label{section2}
Virtual knot theory was introduced by L. Kauffman in \cite{Kau1999}, which can be regarded as an extension of the classical knot theory. Assume we are given an immersed circle on the plane which has finitely many transversal intersections. By replacing each intersection point with an overcrossing, a undercrossing or a virtual crossing we will obtain a virtual knot diagram. We say a pair of virtual knot diagrams are equivalent if they can be connected by a sequence of generalized Reidemeister moves, see Figure \ref{figure1}. A virtual knot can be considered as an equivalence class of virtual knot diagrams that are equivalent under generalized Reidemeister moves.
\begin{figure}
\centering
\includegraphics{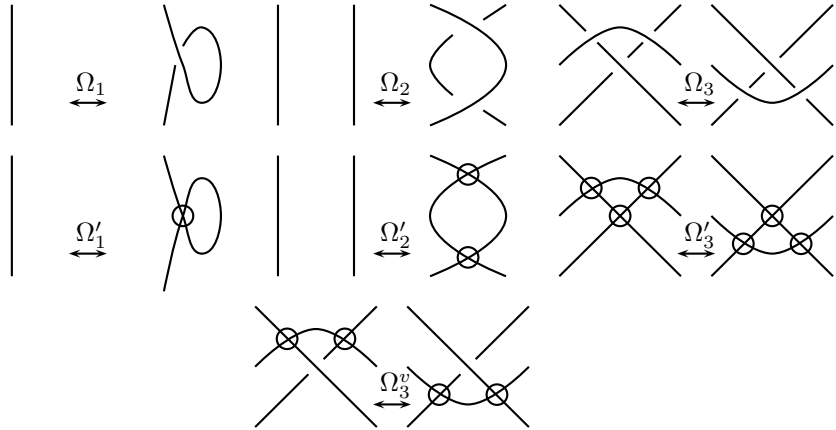}\\
\caption{Generalized Reidemeister moves}\label{figure1}
\end{figure}

Roughly speaking, there are two motivations to extend the classical knot theory to virtual knot theory. From the topological viewpoint, a classical knot is an embedding of $S^1$ into $R^3$ up to isotopies. It is equivalent to replace the ambient space $R^3$ with $S^2\times [0, 1]$. A virtual knot is an embedded circle in the thickened closed orientable surface $\Sigma_g\times [0, 1]$ up to isotopies and (de)stabilizations \cite{Car2002, Kup2003}. When $g=0$, a virtual knot is nothing but a circle in $S^2\times [0, 1]$, which is equivalent to a classical knot in $R^3$.

Besides of the topological interpretation, another motivation of introducing virtual knots is to find a planar realization for an arbitrary Gauss diagram (also called chord diagram). For a given classical knot diagram, it is well known that there exists a unique Gauss diagram corresponding to it. Actually, one can draw an embedded circle on the plane which represents the preimage of the knot diagram. Then for each crossing point we connect the two preimages with an arrow, directed from the preimage of the overcrossing to the preimage of the undercrossing. Finally we label a sign on each arrow according to the writhe of the corresponding crossing point. However, it is easy to observe that there exist some Gauss diagrams which cannot be realized as a classical knot diagram. Therefore one has to add some virtual crossing points. Although for a fixed Gauss diagram there are infinitely many virtual knot diagrams corresponding to it, it was proved in \cite{Kau1999, Gou2000} that all these virtual knot diagrams are equivalent. Namely, a Gauss diagram uniquely defines a virtual knot. See Figure \ref{figure2} for an example of the virtual trefoil knot and the corresponding Gauss diagram. Throughout this paper we will use the same symbol to denote a real crossing point and the corresponding chord.
\begin{figure}[h]
\centering
\includegraphics{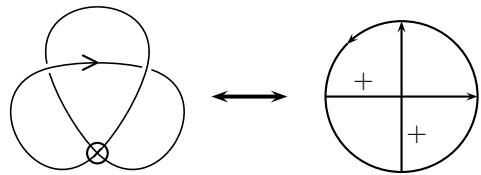}\\
\caption{Virtual trefoil knot and its Gauss diagram}\label{figure2}
\end{figure}

The importance of the realizations of all Gauss diagrams stems from the Gauss diagram representations of finite type invariants. It was first noticed by Polyak and Viro \cite{Pol1994} that some finite type invariants of degree 2 and 3 can be expressed in terms of Gauss diagrams. Later this result was extended to finite type invariants of all degrees in \cite{Gou2000}. More precisely, let $\mathcal{A}=\mathbf{Z}[D]$, where $D\in\mathcal{D}$, the set of all Gauss diagrams. Then the \emph{Polyak algebra} $\mathcal{P}$ is defined to be $\mathcal{A}/\mathcal{R}$, where $\mathcal{R}$ is generated by elements of the form $D-D'$ if $D$ and $D'$ are related by a Reidemeister move. By putting $A=0$ for any $A\in\mathcal{P}$ with more than $n$ chords we obtain the \emph{truncated Polyak algebra} $\mathcal{P}_n$. For two Gauss diagrams $D_1$ and $D_2$, consider the pairing
\begin{center}
$<D_1, D_2>=
\begin{cases}
1& \text{if }D_1=D_2\\
0& \text{if }D_1\neq D_2
\end{cases}$
\end{center}
and extend it linearly one obtains an inner product $< , >:\mathcal{A}\times \mathcal{A}\rightarrow\mathbf{Z}$. Then the main result of \cite{Gou2000} tells us that any integer-valued finite type invariant of degree $n$ of a (long) knot $D$ can be represented as $<A, \sum\limits_{\text{subdiagram }D'\subset D}D'>$ for some $A\in\mathcal{P}_n$. Note that in the world of classical knots, not every subdiagram of $D$ makes sense. For this reason, it is more reasonable to study the finite type invariants in the world of virtual knots.

Before continuing, it would be good to revisit the chord index from the viewpoint of finite type invariant. It is well known that there is no finite type invariant of degree 1 in classical knot theory. It is quite easy to observe this from the result above. Actually, the Gauss diagram with only one chord is unique, hence the inner product above just gives the crossing number of knot diagram. Even if using the signed Gauss diagram with one chord, one obtains the writhe of the knot diagram. Neither of them is a knot invariant. The benefit of the chord index is being able to define a finite type invariant of degree 1 by assigning an index to each chord. This explains the reason why we can define some finite type invariants of degree 1 for virtual knots, see \cite{Saw2003} and \cite{Hen2010} for some concrete examples. Furthermore, if we can find some nontrivial chord indices for classical knots, then it is possible that the weighted sum of all crossing points defines a knot invariant, even if the (signed) sum of all crossing points does not.

Virtual knots not only have finite type invariants of degree 1, but also finite type invariants of degree 0. This reveals the the non-triviality of flat virtual knots. Roughly, flat virtual knots (which was named as virtual strings in \cite{Tur2004}) can be regarded as virtual knots without over/undercrossing informations. More precisely, a flat virtual knot diagram can be obtained from a virtual knot diagram by replacing all real crossing points with flat crossing points. In other words, there are only two kinds of crossing points in a flat virtual knot diagram, flat and virtual. Similarly, by replacing all real crossing points with flat crossing points in Figure \ref{figure1} one obtains the flat generalized Reidemeister moves. We will use $F\Omega_1, F\Omega_2, F\Omega_3, F\Omega_1', F\Omega_2', F\Omega_3'$ and $F\Omega^v_3$ to denote them. Then flat virtual knots can be defined as the equivalence classes of flat virtual knot diagrams up to flat generalized Reidemeister moves. It is evident to notice that if a flat virtual knot diagram has only flat crossing points or virtual crossing points, then it must be trivial. From the geometric viewpoint, flat virtual knots can be seen as immersed curves on an oriented surface. The readers are referred to \cite{Tur2004} for further details of the homotopy classes and cobordism classes of flat virtual knots.

\begin{figure}[h]
\centering
\includegraphics[width=5cm]{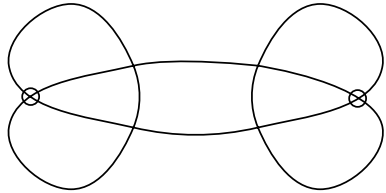}\\
\caption{Kishino flat virtual knot}\label{figure3}
\end{figure}

Let $K$ be a virtual knot diagram, we can define a flat virtual knot diagram $F(K)$ by replacing all real crossing points of $K$ with flat crossing points. It is not difficult to see that the flat virtual knot represented by $F(K)$ does not depend on the choice of the diagram $K$. In other words, if $K$ and $K'$ are related by a sequence of generalized Reidemeister moves, then $F(K)$ and $F(K')$ are related by a sequence of corresponding flat generalized Reidemeister moves. We shall call $F(K)$ a \emph{shadow} of $K$ and say $K$ \emph{overlies} $F(K)$. Conversely, if a flat virtual knot diagram $\widetilde{K}$ has $n$ flat crossing points, then there are totally $2^n$ virtual knot diagrams overlying $\widetilde{K}$. A simple but important fact is, if $\widetilde{K}$ is a nontrivial flat virtual knot then all these $2^n$ virtual knot diagrams represent nontrivial virtual knots. Moreover, if a flat virtual knot diagram has the minimal number of flat crossing points then all the virtual knots overlying it realize the minimal real crossing number. Figure \ref{figure3} provides an illustration of the Kishino flat virtual knot, which is known to be nontrivial \cite{Kau2012}. Therefore all the 16 Kishino virtual knot diagrams overlying it represent nontrivial virtual knots. This explains why the investigation of flat virtual knots is important but not easy in general.

\section{Two virtual knot invariants}\label{section3}
\subsection{An $\mathcal{M}_1^u$-valued virtual knot invariant}\label{3.1}\quad

Let $K$ be a virtual knot diagram and $c$ a real crossing point of it. We want to associate a unoriented flat virtual knot $\widetilde{K}_c$ to $c$ which satisfies the chord index axioms defined in Definition \ref{axiom}. There are two kinds of resolution of the crossing point $c$, see Figure \ref{figure4} (here $c$ could be a positive or a negative crossing). We use \emph{0-smoothing} to denote the resolution which preserves the number of components and \emph{1-smoothing} to denote the other one. After applying 0-smoothing at $c$ we will get another virtual knot, which is unoriented even if $K$ is oriented. We use $K_c$ to denote this unoriented virtual knot and use $\widetilde{K}_c$ to denote the shadow of it, i.e. $\widetilde{K}_c=F(K_c)$.
\begin{figure}[h]
\centering
\includegraphics{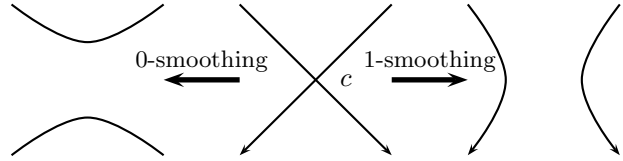}\\
\caption{Two resolutions of $c$}\label{figure4}
\end{figure}
\begin{theorem}\label{theorem3.1}
$\widetilde{K}_c$ satisfies the chord index axioms.
\end{theorem}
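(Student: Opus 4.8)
The plan is to verify the five conditions of Definition \ref{axiom} one at a time, in each case analyzing how the component-preserving ($0$-)smoothing of Figure \ref{figure4} at a real crossing interacts with the generalized Reidemeister move in question. The organizing principle is that smoothing at a real crossing $c$ is a local operation: it commutes with any move carried out in a region disjoint from $c$, while a move carried out \emph{at} $c$, or at crossings sharing the relevant strands with $c$, can be reduced after smoothing to a simpler move on $K_c$. Since $\widetilde{K}_c=F(K_c)$ depends only on the flat virtual knot type of $K_c$, it suffices throughout to exhibit a sequence of (flat) generalized Reidemeister moves relating the diagrams $K_c$ and $K_{c'}$ obtained before and after the move, or, for axioms (1) and (2), to identify $K_c$ explicitly. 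A recurring point to check is that the resolution selected by the ``preserves the number of components'' rule is the same on the two sides of each move, so that the comparison is legitimate.

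I would dispatch the routine axioms (5), (1) and (4) first. For axiom (5) the move occupies a disk disjoint from $c$; smoothing $c$ neither touches this disk nor alters the connectivity at $c$, so the component-preserving resolution is the same before and after and the two smoothed diagrams differ by exactly the same move. For axiom (1), the component-preserving smoothing of the curl created by $\Omega_1$ is the resolution that opens the curl, since the other resolution splits off a disjoint circle and hence raises the component count; the result $K_c$ is the ambient diagram with the curl deleted, so $\widetilde{K}_c=F(K_c)=F(K)$ is the shadow of the fixed virtual knot, which is precisely the ``fixed index'' demanded. For axiom (4), the single real crossing $c$ of $\Omega_3^v$ is carried past a pair of virtual crossings; smoothing $c$ turns this into a detour of the smoothed strands through those virtual crossings, a sequence of virtual moves, so $K_c\simeq K_{c'}$ and the shadows agree.

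For axiom (2) I must show $\widetilde{K}_{c_1}=\widetilde{K}_{c_2}$ for the two crossings $c_1,c_2$ of the $\Omega_2$ bigon. The idea is that smoothing \emph{either} vertex of the bigon and then simplifying the remaining crossing (which becomes a removable curl, possibly after a detour) eliminates the bigon and reconnects the two participating strands. A reconnection of two strands has only two planar outcomes, and the component-preserving requirement selects exactly one of them; hence both $K_{c_1}$ and $K_{c_2}$ reduce to the \emph{same} reconnected diagram $D$, giving $\widetilde{K}_{c_1}=F(D)=\widetilde{K}_{c_2}$. The content here is the local verification that each smoothing indeed leaves a reducible crossing and produces the component-preserving reconnection.

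The main work, and the step I expect to be the real obstacle, is axiom (3). Here $\Omega_3$ permutes three strands, and the three crossings $c_1,c_2,c_3$ correspond to $c_1',c_2',c_3'$ after the move, paired by the two strands each involves. Fixing one crossing, say the one between strands $A$ and $B$, I would smooth it so that $A$ and $B$ merge into a single smoothed strand; the remaining two crossings are then the two places where the third strand $C$ meets this merged strand. My claim is that before and after $\Omega_3$ the strand $C$ sits on opposite sides of the smoothing and the two configurations are related by a detour, i.e.\ an $\Omega_2$-type slide of $C$ across the smoothed arcs, so that $K_{c_i}\simeq K_{c_i'}$ and the shadows coincide. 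The difficulty is that this must be established for each of the three crossings separately and across the several combinatorial variants of $\Omega_3$ (differing orientations and over/under patterns from Figure \ref{figure1}), and in each variant one must confirm that the component-preserving choice of resolution matches on the two sides of the move; assembling these cases into a single clean argument is where the proof will require the most care.
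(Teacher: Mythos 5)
Your proposal is correct and follows essentially the same route as the paper: an axiom-by-axiom verification that the $0$-smoothing interacts trivially with each generalized Reidemeister move, which is exactly what the paper carries out pictorially (its Figures \ref{figure5} and \ref{figure6} are the local checks you describe for $\Omega_2$ and $\Omega_3$, and its two-case treatment of $\Omega_3^v$ -- identical diagrams, or two $F\Omega_2'$-moves -- matches your detour argument). Your bigon-reduction observation for axiom (2) and your explicit flagging that the component-preserving resolution agrees on both sides of each move are sound prose substitutes for the paper's figure checks, not a different method.
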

\begin{proof}
It suffices to show that $\widetilde{K}_c$ satisfies the five requirements in Definition \ref{axiom}.
\begin{enumerate}
  \item If $c$ is a crossing point appearing in $\Omega_1$, after performing 0-smoothing at $c$, it is easy to observe that $\widetilde{K}_c=F(K)$ without considering the orientation, which is a fixed element in $\mathcal{M}_1^u$.
  \item Consider the two crossing points in $\Omega_2$, say $c_1$ and $c_2$. As illustrated in Figure \ref{figure5}, in both cases the chord indices $\widetilde{K}_{c_1}$ and $\widetilde{K}_{c_2}$ are equivalent as flat virtual knots.
  \begin{figure}[h]
  \centering
  \includegraphics{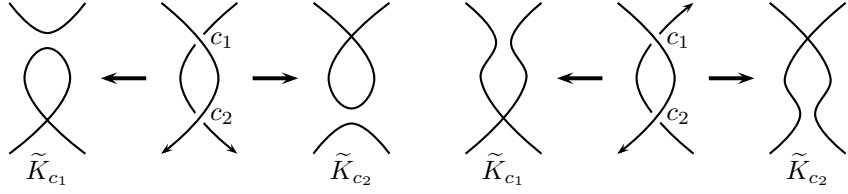}\\
  \caption{Resolutions of crossing points in $\Omega_2$}\label{figure5}
  \end{figure}
  \item Assume $K$ and $K'$ are related by one $\Omega_3$ move, we use $c_1, c_2, c_3$ to denote the three crossing points in $K$ and use $c_1', c_2', c_3'$ to denote the corresponding crossing points in $K'$. It is easy to see that $\widetilde{K}_{c_1}=\widetilde{K'}_{c_1'}, \widetilde{K}_{c_2}=\widetilde{K'}_{c_2'}$ and $\widetilde{K}_{c_3}=\widetilde{K'}_{c_3'}$ from Figure \ref{figure6}.
  \begin{figure}
  \centering
  \includegraphics{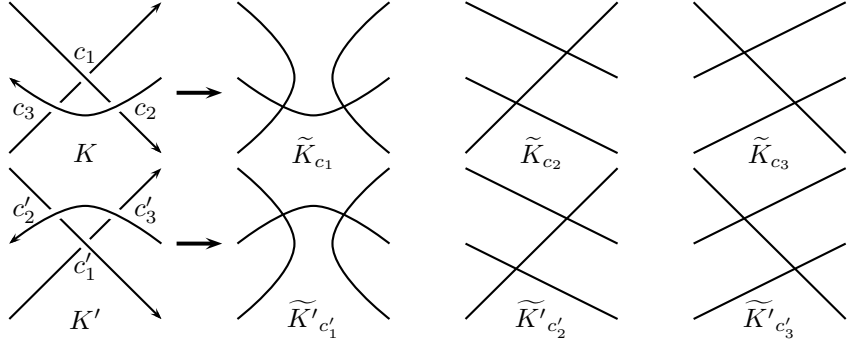}\\
  \caption{Resolutions of crossing points in $\Omega_3$}\label{figure6}
  \end{figure}
  \item For $\Omega^v_3$, there exist two possibilities: in one case two chord indices are the same, in the other case one chord index can be obtained from the other one by two $F\Omega_2'$-moves.
  \item If a crossing point $c$ is not involved in the move, then the two chord indices are related by a flat version of this move.
\end{enumerate}
\end{proof}

\begin{theorem}\label{theorem3.2}
Let $K$ be a virtual knot, then $\mathcal{F}(K)=\sum\limits_cw(c)\widetilde{K}_c-w(K)F(K)\in\mathcal{M}_1^u$ is a virtual knot invariant. Here $F(K)$ should be understood as the shadow of $K$ without the orientation, the sum runs over all the real crossing points of $K$, and $w(c), w(K)$ denote the writhe of $c$ and $K$ respectively.
\end{theorem}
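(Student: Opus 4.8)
The plan is to check that $\mathcal{F}(K)$ is unchanged under each of the generalized Reidemeister moves, using throughout the fact established in Theorem \ref{theorem3.1} that the assignment $c\mapsto\widetilde{K}_c$ satisfies the chord index axioms. I would organize the verification move by move. The purely virtual moves $\Omega_1',\Omega_2',\Omega_3'$ contain no real crossing, so $w(K)$ is unchanged, every summand $\widetilde{K}_c$ is preserved by axiom (5) (each real crossing lies outside the move, so its shadow changes only by a flat move and hence is unchanged in $\mathcal{M}_1^u$), and $F(K)$ changes only by the corresponding flat move; thus $\mathcal{F}(K)$ is trivially invariant under these.

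For $\Omega_2$ the two crossings $c_1,c_2$ that are created or deleted carry opposite signs, so $w(K)$ is unchanged, and axiom (2) gives $\widetilde{K}_{c_1}=\widetilde{K}_{c_2}$, whence $w(c_1)\widetilde{K}_{c_1}+w(c_2)\widetilde{K}_{c_2}=0$; the remaining summands and $F(K)$ are unchanged, so $\mathcal{F}$ is preserved. For $\Omega_3$ the three crossings retain their signs, and axiom (3) matches each $\widetilde{K}_{c_i}$ with its image $\widetilde{K'}_{c_i'}$, so the weighted sum is preserved term by term while $w(K)$ and $F(K)$ are unaffected. For the mixed move $\Omega_3^v$ the single real crossing keeps its sign and, by axiom (4), its index (the two shadows differing at worst by two $F\Omega_2'$ moves, hence equal in $\mathcal{M}_1^u$); again $w(K)$ and $F(K)$ do not change.

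The heart of the matter, and the reason the correction term $-w(K)F(K)$ is present at all, is $\Omega_1$. Suppose the move introduces one extra real crossing $c_0$ of sign $\epsilon=\pm1$. Axiom (1) gives $\widetilde{K}_{c_0}=F(K)$ in $\mathcal{M}_1^u$, and $F(K)$ itself is unaltered (it changes only by a flat $\Omega_1$). The new summand therefore contributes $\epsilon F(K)$, while $w(K)$ increases by $\epsilon$, so the correction term contributes $-\epsilon F(K)$; the two cancel exactly, and the remaining summands are fixed by axiom (5). I do not anticipate a genuine obstacle here: once Theorem \ref{theorem3.1} is in hand the argument reduces to a bookkeeping of signs and indices, and the only point needing real care is that the fixed value forced by axiom (1) is precisely the unoriented shadow $F(K)$, which is exactly what makes the $\Omega_1$ cancellation work.
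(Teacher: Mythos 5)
Your proposal is correct and takes essentially the same route as the paper: the paper's two-sentence proof simply invokes the chord index axioms together with the same two key observations you verify in detail, namely that $\widetilde{K}_c=F(K)$ for the crossing $c$ created by $\Omega_1$ (so it cancels against the correction term $-w(K)F(K)$) and that the two crossings in $\Omega_2$ carry opposite writhes. Your move-by-move bookkeeping is just the unabridged version of that argument.
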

\begin{proof}
Notice that $\widetilde{K}_c=F(K)$ if $c$ is the crossing point involved in $\Omega_1$ and the writhes of the two crossing points involved in $\Omega_2$ are distinct. The result follows directly from the chord index axioms.
\end{proof}

Obviously, if $K$ is a classical knot, then $\mathcal{F}(K)=0$. Actually, since any flat virtual knot diagram with one or two flat crossing points represent the unknot. It follows that if the number of real crossing points of $K$ is less than or equal to 2, we also have $\mathcal{F}(K)=0$.

\begin{proposition}
Let $K$ be a virtual knot diagram. If $F(K)$ is a minimal flat crossing number diagram, then all minimal real crossing number diagrams of $K$ have the same writhe.
\end{proposition}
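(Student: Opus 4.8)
The plan is to read off the writhe directly from the invariant $\mathcal{F}$ of Theorem \ref{theorem3.2}, exploiting the fact that $\mathcal{M}_1^u$ is the \emph{free} $\mathbf{Z}$-module on flat virtual knots, so that the coefficient of any fixed basis element is a well-defined integer which cannot change from diagram to diagram. First I would pin down the relevant crossing numbers. Write $n$ for the number of flat crossings of $F(K)$; the hypothesis says $n$ is the minimal flat crossing number of the flat virtual knot $F(K)$. Since $K$ overlies $F(K)$, the fact recalled in Section \ref{section2} (every virtual knot overlying a minimal flat crossing number diagram realizes the minimal real crossing number) shows that the minimal real crossing number of $K$ is exactly $n$. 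Hence any minimal real crossing number diagram $D$ of $K$ has exactly $n$ real crossings, and its shadow $F(D)$, which represents $F(K)$ with $n$ flat crossings, is again a minimal flat crossing number diagram.

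Next I would isolate the coefficient of the basis element $F(K)$ in $\mathcal{F}(K)$. For each real crossing $c$ of such a $D$, the smoothed shadow $\widetilde{D}_c=F(D_c)$ is obtained by deleting the crossing $c$, so it is a flat virtual knot diagram with only $n-1$ flat crossings; its minimal flat crossing number is therefore at most $n-1<n$, and so $\widetilde{D}_c\neq F(K)$ as flat virtual knots. Consequently, in
\[
\mathcal{F}(D)=\sum_c w(c)\widetilde{D}_c-w(D)F(D),
\]
the term $-w(D)F(D)=-w(D)F(K)$ is the only summand supported on the basis element $F(K)$, while every $\widetilde{D}_c$ contributes only to other basis elements. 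Thus the coefficient of $F(K)$ in $\mathcal{F}(D)$ equals $-w(D)$.

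Finally, since $\mathcal{F}$ is a virtual knot invariant, $\mathcal{F}(D)=\mathcal{F}(K)$ is one fixed element of $\mathcal{M}_1^u$ for every diagram $D$ of $K$, so its $F(K)$-coefficient is independent of $D$. Comparing two minimal real crossing number diagrams $D$ and $D'$ then forces $-w(D)=-w(D')$, i.e. all minimal diagrams share the same writhe. I expect the main obstacle to be securing the clean separation in the middle step: one must guarantee that none of the smoothed shadows $\widetilde{D}_c$ accidentally coincides with $F(K)$, so that the writhe term is genuinely isolated. This is precisely where the minimality hypothesis on $F(K)$ enters — without it some $\widetilde{D}_c$ could equal $F(K)$, and the $F(K)$-coefficient would then blend the writhe contribution with the smoothing contributions, destroying the argument.
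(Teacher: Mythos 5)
Your proof is correct and follows essentially the same route as the paper's: both isolate the coefficient of the basis element $F(K)$ in $\mathcal{F}$, using the minimality of $F(K)$ to guarantee that every smoothed shadow $\widetilde{D}_c$ has strictly fewer flat crossings and hence cannot equal $F(K)$, so the coefficient is exactly $-w(D)$ and invariance of $\mathcal{F}$ forces equal writhes. Your write-up merely spells out a few steps the paper leaves implicit (that each minimal real crossing diagram has a minimal flat shadow, and that $0$-smoothing yields $n-1$ flat crossings).
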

\begin{proof}
As we mentioned in end of Section \ref{section2}, if $F(K)$ is a minimal flat crossing number diagram then $K$ must realizes the minimal number of real crossing points. Notice that any flat knot $\widetilde{K}_c$ has fewer flat crossing number than $F(K)$, it follows that the coefficient of $F(K)$ in $\mathcal{F}(K)$ equals $-w(K)$. Choose another minimal real crossing number diagram $K'$ which represents the same virtual knot as $K$. Note that $F(K)$ and $F(K')$ represent the same flat virtual knot, then the coefficients of them must be the same. It follows that $w(K)=w(K')$.
\end{proof}

\begin{corollary}
Let $\widetilde{K}$ be a minimal flat crossing number diagram of a flat virtual knot. If the number of flat crossing points in $\widetilde{K}$ equals $n$, then the $2^n$ virtual knot diagrams overlying $\widetilde{K}$ represent at least $n+1$ distinct virtual knots.
\end{corollary}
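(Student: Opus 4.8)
The plan is to separate the overlying diagrams by their writhes, using the invariant $\mathcal{F}$ from Theorem \ref{theorem3.2} as the distinguishing tool. First I would record that each of the $2^n$ virtual knot diagrams $K$ overlying $\widetilde{K}$ satisfies $F(K)=\widetilde{K}$, and since $\widetilde{K}$ realizes the minimal flat crossing number, each such $K$ realizes the minimal real crossing number, as recalled at the end of Section \ref{section2}. Thus the hypothesis of the preceding Proposition applies verbatim to every overlying $K$, and I may import its computation of the coefficient of $\widetilde{K}$.

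Next I would read off the writhe from $\mathcal{F}(K)$. As in the proof of that Proposition, every smoothed shadow $\widetilde{K}_c$ is represented by a diagram with $n-1$ flat crossings, so it cannot coincide with the minimal diagram $\widetilde{K}$ as a flat virtual knot; therefore the coefficient of $\widetilde{K}=F(K)$ in the expansion of $\mathcal{F}(K)\in\mathcal{M}_1^u$ is exactly $-w(K)$. Because $\mathcal{F}$ is a virtual knot invariant, any two overlying diagrams representing the same virtual knot have equal $\mathcal{F}$-values, hence equal coefficients of $\widetilde{K}$, hence equal writhes. Consequently overlying diagrams with distinct writhes must represent distinct virtual knots, so the number of distinct virtual knots is at least the number of distinct writhes occurring among the $2^n$ diagrams.

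Finally I would count those writhes. Fixing an orientation of the immersed curve underlying $\widetilde{K}$, at each flat crossing the two strands acquire definite directions, so each over/under resolution produces a real crossing of a definite sign; since interchanging over and under reverses the sign of a crossing, the two resolutions at a fixed crossing contribute $+1$ and $-1$ respectively. The writhe of an overlying diagram is therefore $\sum_{i=1}^n\epsilon_i$ with each $\epsilon_i\in\{+1,-1\}$ chosen freely, and the set of attainable writhes is precisely $\{-n,-n+2,\dots,n-2,n\}$, which has $n+1$ elements. This produces at least $n+1$ distinct virtual knots.

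The point demanding care is the second step: one must guarantee that no smoothed shadow $\widetilde{K}_c$ can equal $\widetilde{K}$ as a flat virtual knot, so that the coefficient of $\widetilde{K}$ in $\mathcal{F}(K)$ recovers $-w(K)$ cleanly. This is exactly where the minimality of the flat crossing number of $\widetilde{K}$ is indispensable, as it forbids any reduction that would let a diagram with one fewer flat crossing represent the same flat virtual knot.
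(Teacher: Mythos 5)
Your proof is correct and is essentially the argument the paper intends: the corollary is stated without proof precisely because it follows from the preceding Proposition (writhe is constant on minimal real crossing number diagrams of a fixed virtual knot when the shadow is minimal), combined with the observation that the $2^n$ resolutions realize all $n+1$ writhes $\{-n,-n+2,\dots,n\}$. Your inlined re-derivation of the coefficient $-w(K)$ of $\widetilde{K}$ in $\mathcal{F}(K)$, including the remark that minimality forbids any $\widetilde{K}_c$ from equaling $\widetilde{K}$, matches the paper's own proof of that Proposition step for step.
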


\begin{proposition}
Let $K$ be an oriented virtual knot diagram, if we use $r(K)$ to denote the diagram obtained from $K$ by reversing the orientation, and use $m(K)$ to denote the diagram obtained from $K$ by switching all real crossing points, then we have $\mathcal{F}(r(K))=\mathcal{F}(K)$ and $\mathcal{F}(m(K))=-\mathcal{F}(K)$.
\end{proposition}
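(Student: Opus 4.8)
The plan is to unwind $\mathcal{F}$ into its three constituent pieces---the crossing writhes $w(c)$, the flat knots $\widetilde{K}_c$, and the shadow $F(K)$---and to record how each piece transforms under $r$ and under $m$. The heart of the argument is the observation that both $\widetilde{K}_c=F(K_c)$ and $F(K)$ depend only on the underlying shadow of $K$ (equivalently, its immersed curve, or $4$-valent graph) together with the choice of crossing $c$, and are insensitive both to the orientation of $K$ and to the over/under data at the crossings. Indeed, whether a given resolution at $c$ is the $0$-smoothing is decided by whether it preserves the number of components, which is a connectivity property of the shadow alone; and the operator $F$ by definition discards all over/under information. Consequently $F(K_c)$ is unchanged if we either reverse the orientation of $K$ or switch some (or all) of its real crossings, and the same holds for $F(K)$.

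For $r(K)$, I would first note that as an unoriented diagram $r(K)$ is literally $K$, so the flat knot associated with $c$ in $r(K)$ equals $\widetilde{K}_c$, and $F(r(K))=F(K)$ in $\mathcal{M}_1^u$. It then remains only to check the writhes. Since $K$ is a one-component diagram, reversing its orientation reverses the direction of both strands at every self-crossing simultaneously; the local sign is the determinant of the two strand directions and is therefore unchanged. Hence $w(c)$ is preserved for each $c$ and $w(K)=\sum_c w(c)$ is preserved, so term by term $\mathcal{F}(r(K))=\mathcal{F}(K)$.

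For $m(K)$, the shadow is again unchanged (crossing switches do not alter the $4$-valent graph), so the flat knot associated with $c$ in $m(K)$ is once more $\widetilde{K}_c$ and $F(m(K))=F(K)$. This time, however, switching a crossing reverses its sign, so $w(c)\mapsto -w(c)$ for every $c$ and $w(m(K))=-w(K)$. Substituting into the definition yields
\[
\mathcal{F}(m(K))=\sum_c\bigl(-w(c)\bigr)\widetilde{K}_c-\bigl(-w(K)\bigr)F(K)=-\mathcal{F}(K).
\]

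I expect the only genuine subtlety to be the central observation of the first paragraph: one must make sure that the $0$-smoothing/$1$-smoothing dichotomy is truly a statement about the shadow, so that neither reorienting $K$ nor switching its crossings can interchange the two resolutions assigned to a fixed crossing $c$. Once this is pinned down, the writhe bookkeeping---sign-preserving under $r$ because both strands reverse, sign-reversing under $m$ because each crossing is switched---is routine, and both identities drop out of the definition of $\mathcal{F}$.
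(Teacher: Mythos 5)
Your proof is correct and follows essentially the same route as the paper's: both rest on the observations that $F(K)$ and each $\widetilde{K}_c$ depend only on the underlying shadow (so are unchanged by $r$ and $m$), that $w(c)$ is preserved under orientation reversal since both strands at a self-crossing reverse, and that $w(c)$ changes sign under crossing switches. Your extra remark that the $0$-smoothing/$1$-smoothing dichotomy is a connectivity property of the shadow alone makes explicit a point the paper leaves implicit, but the argument is the same.
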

\begin{proof}
Notice that as a unoriented flat virtual knot we have $F(r(K))=F(m(K))=F(K)$. On the other hand, the unoriented flat virtual knots $\widetilde{r(K)_c}$ and $\widetilde{m(K)_c}$ are equivalent to $\widetilde{K_c}$, but the writhe of $c$ is preserved in $r(K)$ but changed in $m(K)$.
\end{proof}

\begin{remark}
It is well known that the connected sum operation is not well defined in virtual knot theory. In fact, the result of a connected sum depends on the place where the operation is taken. However, we can still talk about the connected sum of two virtual knot diagrams. Let $K_1$ and $K_2$ be two separate virtual knot diagrams and $K_1\sharp K_2$ the connected sum of them with a fixed place where the connected-sum is taken. According to the definition of $\mathcal{F}(K)$, we have
\begin{flalign*}
\mathcal{F}(K_1\sharp K_2)=&\sum\limits_{c\in K_1\sharp K_2}w(c)\widetilde{K_1\sharp K_2}_c-w(K_1\sharp K_2)F(K_1\sharp K_2)\\
=&\sum\limits_{c\in K_1\sharp K_2}w(c)\widetilde{K_1\sharp K_2}_c-w(K_1)F(K_1)\sharp F(K_2)-w(K_2)F(K_1)\sharp F(K_2)\\
=&(\sum\limits_{c\in K_1}w(c)\widetilde{K_1}_c-w(K_1)F(K_1))\sharp F(K_2)+F(K_1)\sharp(\sum\limits_{c\in K_2}w(c)\widetilde{K_2}_c-w(K_2)F(K_2))\\
=&\mathcal{F}(K_1)\sharp F(K_2)+F(K_1)\sharp\mathcal{F}(K_2)
\end{flalign*}
where all the connected sum mentioned above are taken at the same place. Here we want to remark that the equality $\mathcal{F}(K_1\sharp K_2)=\mathcal{F}(K_1)\sharp F(K_2)+F(K_1)\sharp\mathcal{F}(K_2)$ does not hold for virtual knots in general. This is because even if two flat virtual knot diagrams are equivalent up to flat generalized Reidemeister moves, after taking the connected sum with another flat virtual knot diagram they may represent different flat virtual knots, see the following example. It means that $\mathcal{F}(K)$ can be used to show the fact that the connected sum operation is not well defined for virtual knots. Note that as a specialization, the writhe polynomial $W_{K}(t)$ (see Example \ref{3.9} for the definition) is additive with respect to connected sum \cite{Che2013}.
\end{remark}

\begin{example}
Let $K$ be the virtual knot described in Figure \ref{figure7}. Direct calculation shows that $\mathcal{F}(K)=\widetilde{\text{Kishino}}+4U-5F(K)$, where $\widetilde{\text{Kishino}}$ denotes the Kishino flat virtual knot depicted in Figure \ref{figure3} and $U$ denotes the unknot. Since $\widetilde{\text{Kishino}}$ is nontrivial, we conclude that $\mathcal{F}(K)\neq0$. Notice that $K$ can be regarded as the connected sum of two virtual trefoil knots, but $\mathcal{F}$ evaluated on the virtual trefoil knot equals zero.
\end{example}
\begin{figure}[h]
\centering
\includegraphics[width=5cm]{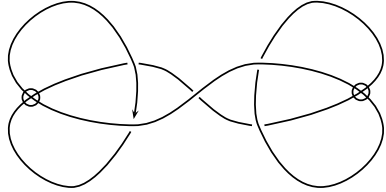}\\
\caption{A virtual knot $K$ with nontrivial $\mathcal{F}$}\label{figure7}
\end{figure}

\begin{remark}
The invariant $\mathcal{F}(K)$ can be easily extended to an invariant of $n$-component virtual links $(n\geq2)$. For this purpose, it suffices to redefine $\mathcal{M}_1^u$ to be the free $\mathbf{Z}$-module generated by all unoriented flat virtual links. Now the the chord index of a self-crossing point is a unoriented $n$-component flat virtual link, and the chord index of a mixed-crossing point is a unoriented $(n-1)$-component flat virtual link.
\end{remark}

\subsection{An $\mathcal{M}_2$-valued virtual knot invariant}\label{3.2}\quad

Let $K$ be a virtual knot diagram and $c$ a real crossing point of $K$. We plan to use the 1-smoothing operation to define an oriented 2-component flat virtual link $\widetilde{L}_c$ such that it also satisfies the chord index axioms defined in Definition \ref{axiom}. Recall that $\mathcal{M}_2$ is the free $\mathbf{Z}$-module generated by all the oriented  2-component flat virtual links. Performing 1-smoothing at $c$ transforms the virtual knot $K$ into an oriented 2-component virtual link. We use $L_c$ to denote this oriented virtual link and use $\widetilde{L}_c$ to denote the shadow of it, which lies in $\mathcal{M}_2$. Then we have the following result.
\begin{theorem}
$\widetilde{L}_c$ satisfies the chord index axioms.
\end{theorem}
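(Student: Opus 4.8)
The plan is to mirror, essentially verbatim, the proof of Theorem~\ref{theorem3.1}, replacing the 0-smoothing by the 1-smoothing throughout. The strategy is again to verify the five requirements of Definition~\ref{axiom} one at a time, so the whole argument reduces to a diagrammatic check for each generalized Reidemeister move. The only genuine difference is bookkeeping: $1$-smoothing a real crossing of a knot produces a two-component object $L_c$ rather than a one-component object, so I must track orientations carefully, since $\mathcal{M}_2$ is the free module on \emph{oriented} $2$-component flat virtual links and the cancellations permitted in $\mathcal{M}_1^u$ (which ignored orientation) are no longer automatically available.

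Concretely, I would proceed as follows. First, for requirement (1), observe that $1$-smoothing the single crossing in an $\Omega_1$ configuration splits off a small unknotted, unlinked circle; the resulting oriented flat link is therefore $F(K)$ together with a disjoint trivial component, and this is a fixed element of $\mathcal{M}_2$ for a fixed virtual knot, so the index of that crossing is fixed. Next, for requirement (2), I would examine the two crossings $c_1,c_2$ created in an $\Omega_2$ move: $1$-smoothing either one yields the \emph{same} oriented $2$-component flat virtual link up to flat generalized Reidemeister moves, so $\widetilde{L}_{c_1}=\widetilde{L}_{c_2}$ in $\mathcal{M}_2$. For requirement (3), the $\Omega_3$ move, I would compare $1$-smoothings of the three crossings before and after the move and read off from the corresponding figure (the $1$-smoothing analogue of Figure~\ref{figure6}) that each $\widetilde{L}_{c_i}$ is unchanged. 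Requirement (4) for $\Omega_3^v$ and requirement (5) for a crossing not involved in a move then follow exactly as in Theorem~\ref{theorem3.1}: in the $\Omega_3^v$ case the two resulting flat links either coincide or differ by two $F\Omega_2'$-moves, and in the general case the two smoothings differ by the flat version of the move in question.

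The one point demanding real care, and the step I expect to be the main obstacle, is the orientation issue in requirements (1) and (2). When we $1$-smooth an oriented knot at a crossing, the two resulting arcs must be reconnected so as to respect the ambient orientation; this is precisely what distinguishes the $1$-smoothing from the $0$-smoothing and forces the output to split into two components. I would need to check that the induced orientations on the two components of $L_c$ are well defined and that, in the $\Omega_2$ case, the orientations produced by smoothing $c_1$ and by smoothing $c_2$ agree after the identifying flat isotopy, so that the equality $\widetilde{L}_{c_1}=\widetilde{L}_{c_2}$ truly holds in the oriented module $\mathcal{M}_2$ and not merely in its unoriented quotient. Once this orientation compatibility is confirmed diagrammatically, the remaining verifications are routine figure-chasing identical in spirit to the unoriented case already established.
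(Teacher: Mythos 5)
Your proposal is correct and follows essentially the same route as the paper: a case-by-case verification of the five axioms via the $1$-smoothing analogues of the $\Omega_1$, $\Omega_2$, $\Omega_3$, $\Omega_3^v$ checks (the paper's Figures~\ref{figure8}--\ref{figure10}), with the same resolution of $\Omega_3^v$ by two $F\Omega_2'$-moves. The orientation issue you flag is in fact automatic, since the $1$-smoothing is precisely the orientation-coherent smoothing, so both components of $L_c$ inherit canonical orientations from $K$; the only residual subtlety, which the paper records in a remark, is that $\widetilde{L}_c$ carries no preferred \emph{order} on its two components.
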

\begin{proof}
Analogous to the proof of Theorem \ref{theorem3.1}, it suffices to check that $\widetilde{L}_c$ satisfies all the five conditions in Definition \ref{axiom}.
\begin{enumerate}
\item If $c$ is a crossing point involved in $\Omega_1$, then after the operation we see that $\widetilde{L}_c=F(K)\cup U$, which is a fixed element of $\mathcal{M}_2$, see Figure \ref{figure8}.
\begin{figure}[h]
\centering
\includegraphics{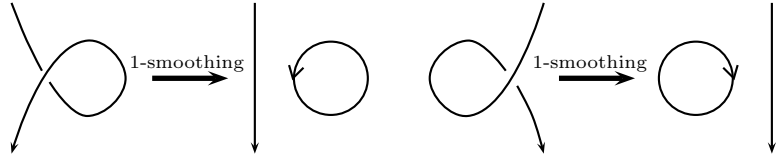}\\
\caption{Resolution of the crossing point in $\Omega_1$}\label{figure8}
\end{figure}
\item Let $c_1, c_2$ be the two crossing points in $\Omega_2$. It is easy to find that for each case we have $\widetilde{L}_{c_1}=\widetilde{L}_{c_2}$, see Figure \ref{figure9}.
\begin{figure}[h]
\centering
\includegraphics{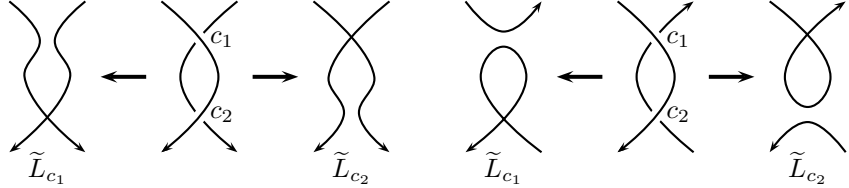}\\
\caption{Resolutions of the two crossing points in $\Omega_2$}\label{figure9}
\end{figure}
\item For $\Omega_3$, let us denote the three crossing points before the move by $c_1, c_2, c_3$, and use $c_1', c_2', c_3'$ to denote the corresponding crossing points after the move, see Figure \ref{figure10}. Applying 1-smoothing for each of them, it follows evidently that $\widetilde{L}_{c_i}=\widetilde{L'}_{c_i'}$ for all $1\leq i\leq 3$.
\begin{figure}[h]
\centering
\includegraphics{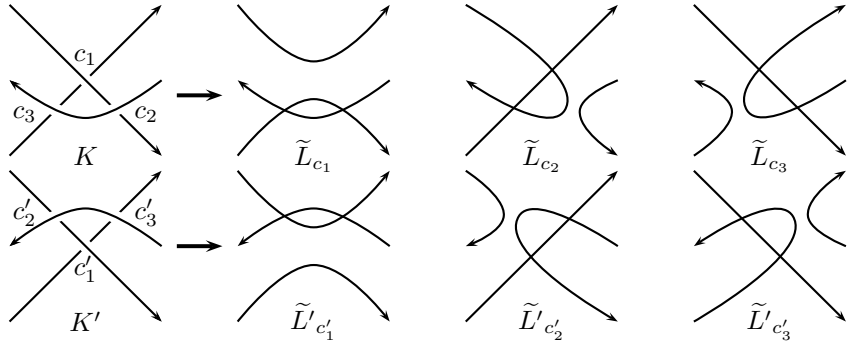}\\
\caption{Resolutions of the three crossing points in $\Omega_3$}\label{figure10}
\end{figure}
\item As before, there are two possibilities according to the orientations of the strands in $\Omega^v_3$: in one case the two chord indices are the same, and in the other case one chord index can be obtained from the other one by two $F\Omega_2'$-moves.
  \item If $c$ does not appear in the move, then the two chord indices can be connected by one flat version of the move.
\end{enumerate}
\end{proof}

Similar to Theorem \ref{theorem3.2}, this chord index also provides us a virtual knot invariant.

\begin{theorem}
Let $K$ be a virtual knot, then $\mathcal{L}(K)=\sum\limits_cw(c)\widetilde{L}_c-w(K)(F(K)\cup U)\in\mathcal{M}_2$ defines a virtual knot invariant. Here $F(K)$ denotes the shadow of $K$, the sum runs over all the real crossing points of $K$ and $w(c), w(K)$ denote the writhe of $c$ and $K$ respectively.
\end{theorem}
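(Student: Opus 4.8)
The plan is to follow the template set by the proof of Theorem \ref{theorem3.2}, verifying that $\mathcal{L}(K)$ is unchanged under each generalized Reidemeister move while exploiting the fact, just established, that $\widetilde{L}_c$ satisfies the chord index axioms. The only ingredients beyond those axioms are the elementary bookkeeping properties of the writhe: that $w(K)$ is preserved under every generalized Reidemeister move except $\Omega_1$, that an $\Omega_1$ move changes $w(K)$ by exactly the sign $w(c)=\pm1$ of the crossing it creates, and that the two crossings produced by an $\Omega_2$ move carry opposite signs. Since the flat virtual link $F(K)\cup U$ changes only by flat generalized Reidemeister moves under any virtual move, it represents a fixed element of $\mathcal{M}_2$ throughout.

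First I would dispose of the purely virtual moves and $\Omega_3^v$. For $\Omega_1', \Omega_2', \Omega_3'$ no real crossing is touched, so axiom (5) fixes every $\widetilde{L}_c$, while $w(K)$ and $F(K)\cup U$ are visibly unaffected. For $\Omega_3^v$, axiom (4) guarantees the index of the single real crossing is preserved and its sign is unchanged, so $\mathcal{L}(K)$ is invariant.

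Next I would treat $\Omega_2$ and $\Omega_3$. Under $\Omega_3$, each of the three crossings retains its sign and, by axiom (3), its index $\widetilde{L}_{c_i}=\widetilde{L'}_{c_i'}$; the remaining crossings are preserved by axiom (5), and $w(K)$ and $F(K)\cup U$ are unchanged, so the whole expression is invariant. Under $\Omega_2$ the two newly created crossings $c_1,c_2$ satisfy $w(c_1)=-w(c_2)$, while axiom (2) gives $\widetilde{L}_{c_1}=\widetilde{L}_{c_2}$; hence their combined contribution $w(c_1)\widetilde{L}_{c_1}+w(c_2)\widetilde{L}_{c_2}$ vanishes, and since $w(K)$ is also unchanged the invariant is preserved.

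The main obstacle, and the step I would write out most carefully, is $\Omega_1$. Here a single crossing $c$ of sign $w(c)=\varepsilon=\pm1$ is created, changing $w(K)$ by $\varepsilon$, while axiom (1) gives $\widetilde{L}_c=F(K)\cup U$. Thus the move adds $\varepsilon(F(K)\cup U)$ to $\sum_c w(c)\widetilde{L}_c$ and simultaneously adds $-\varepsilon(F(K)\cup U)$ to $-w(K)(F(K)\cup U)$, so the two changes cancel exactly. The delicate point is to check that the flat virtual link attached to the new crossing really coincides with the correction term's $F(K)\cup U$: this holds because the flat move $F\Omega_1$ removes the kink, so the shadow of the diagram is unaltered as a flat virtual knot, and the trivial loop split off by the $1$-smoothing at $c$ is precisely the unknot component $U$. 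Once this cancellation is confirmed, invariance under all generalized Reidemeister moves follows and the proof is complete.
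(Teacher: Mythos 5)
Your proposal is correct and follows essentially the same route as the paper: the paper proves this theorem by remarking it is ``similar to Theorem \ref{theorem3.2}'', whose proof consists precisely of the two cancellations you identify --- the $\Omega_1$ crossing has index $\widetilde{L}_c=F(K)\cup U$ so its contribution is absorbed by the $-w(K)(F(K)\cup U)$ correction term, and the two $\Omega_2$ crossings have equal indices but opposite writhes --- with all remaining cases handled by the chord index axioms. You have simply written out in full the move-by-move verification that the paper leaves implicit, which is a faithful and complete elaboration rather than a different argument.
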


\begin{remark}
Here each $\widetilde{L}_c$ should be understood as a unordered 2-component flat virtual link. According to Figure \ref{figure8}, one can observe that there does not exist a preferred order on the two components of $\widetilde{L}_c$.
\end{remark}

The following proposition follows directly from the definition of $\mathcal{L}(K)$.
\begin{proposition}
Let $K$ be an oriented virtual knot diagram, if we use $r(K)$ to denote the diagram obtained from $K$ by reversing the orientation, and $m(K)$ denotes the diagram obtained from $K$ by switching all real crossing points, then we have $\mathcal{L}(r(K))=r(\mathcal{L}(K))$ and $\mathcal{L}(m(K))=-\mathcal{L}(K)$.
\end{proposition}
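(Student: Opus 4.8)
The plan is to follow the template of the preceding proposition for $\mathcal{F}$, but to track orientations carefully, since $\mathcal{L}$ takes values in the module $\mathcal{M}_2$ of \emph{oriented} $2$-component flat virtual links and the reversal operation $r$ now acts nontrivially on the target. Recall $\mathcal{L}(K)=\sum_c w(c)\widetilde{L}_c-w(K)(F(K)\cup U)$. I would analyse separately how the two ingredients — the writhes $w(c),w(K)$ and the oriented flat links $\widetilde{L}_c$ and $F(K)\cup U$ — transform under $r$ and under $m$, and then recombine by $\mathbf{Z}$-linearity.

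First I would record the writhes. Every crossing of a knot is a self-crossing, so reversing the global orientation reverses both strands at each crossing and leaves each local sign unchanged; hence the writhe of each $c$ and of $K$ is preserved under $r$. Switching a crossing negates its sign, so passing to $m(K)$ replaces $w(c)$ by $-w(c)$ and $w(K)$ by $-w(K)$.

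The geometric heart of the argument is the observation that the $1$-smoothing producing $\widetilde{L}_c$ is the \emph{oriented} smoothing, the one compatible with the orientation of $K$. Writing $\widetilde{L}_c^{\,r}$ and $\widetilde{L}_c^{\,m}$ for the flat links obtained by $1$-smoothing $c$ in $r(K)$ and in $m(K)$, I would deduce two facts. Since the oriented smoothing is determined by the orientations alone and ignores which strand is the overcrossing, switching a crossing does not affect the resulting flat link, so $\widetilde{L}_c^{\,m}=\widetilde{L}_c$ and likewise $F(m(K))\cup U=F(K)\cup U$. On the other hand, both components of $\widetilde{L}_c$ inherit their orientations from $K$, so reversing the orientation of $K$ reverses both components at once — which is exactly the action of $r$ on $\mathcal{M}_2$ — giving $\widetilde{L}_c^{\,r}=r(\widetilde{L}_c)$ and $F(r(K))\cup U=r(F(K)\cup U)$ (here the split unknot $U$ is invariant under orientation reversal).

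It then remains to substitute. For reversal, the writhes are unchanged while every flat link is replaced by its $r$-image, so $\mathcal{L}(r(K))=\sum_c w(c)\,r(\widetilde{L}_c)-w(K)\,r(F(K)\cup U)=r(\mathcal{L}(K))$. For the mirror, the flat links are unchanged while every writhe is negated, giving $\mathcal{L}(m(K))=\sum_c(-w(c))\widetilde{L}_c+w(K)(F(K)\cup U)=-\mathcal{L}(K)$. I expect the only genuine obstacle to be the bookkeeping in the previous paragraph: one must confirm that ``reverse the orientation of the knot'' induces precisely the operation $r$ on $\mathcal{M}_2$, with both components reversed and no spurious reordering, and that the correction term transforms correctly, including the orientation of the auxiliary component $U$. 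Everything else is a direct transcription of the $\mathcal{F}$ computation.
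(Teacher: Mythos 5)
Your proof is correct and is exactly the direct verification that the paper intends: the paper states this proposition ``follows directly from the definition of $\mathcal{L}(K)$'' and supplies no further argument, and your bookkeeping (writhes fixed under $r$, negated under $m$; the oriented $1$-smoothing independent of over/under information, with both components reversed under $r$, consistently with $\widetilde{L}_c$ being unordered) fills in precisely the routine details being taken for granted.
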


It is evident that $\mathcal{L}(K)=0$ if $K$ is a classical knot. But unlike $\mathcal{F}(K)$, which vanishes on virtual knots with two real crossing points, $\mathcal{L}(K)$ is able to distinguish the virtual trefoil knot from the unknot. Note that up to some symmetries, the virtual trefoil knot is the only one which has real crossing number two.

\begin{example}\label{3.11}
Consider the virtual trefoil knot $K$ in Figure \ref{figure2}. We have $\mathcal{L}(K)=2\widetilde{HL}-2(U\cup U)$, here $\widetilde{HL}$ denotes the flat virtual Hopf link which can be obtained from the classcial Hopf link diagram by replacing the two real crossing points with one virtual crossing point and one flat crossing point. As before, $U$ means the unknot. In order to prove $\mathcal{L}(K)\neq0$, it suffices to show that $\widetilde{HL}$ is nontrivial. To this end, let us consider the \emph{flat linking number} of a unordered 2-component flat virtual link. Suppose $\widetilde{L}=\widetilde{K}_1\cup\widetilde{K}_2$ is a unordered 2-component flat virtual link. Denote $C_{\widetilde{K}_1\cap\widetilde{K}_2}$ to be the set of all flat crossing points between $\widetilde{K}_1$ and $\widetilde{K}_2$. Replacing all the flat crossing points in $C_{\widetilde{K}_1\cap\widetilde{K}_2}$ with a real crossing point such that at each crossing point the over-strand belongs to $\widetilde{K}_1$, then we can define the flat linking number as
\begin{center}
$lk(\widetilde{L})=|\sum\limits_{c\in C_{\widetilde{K}_1\cap\widetilde{K}_2}}w(c)|$,
\end{center}
where $w(c)$ means the writhe of the real crossing point $c$. Clearly, this definition does not depend on the order of $\widetilde{K}_1$ and $\widetilde{K}_2$, and it is invariant under all flat generalized Reidemeister moves. Obviously, $lk(\widetilde{HL})=1$ and it follows that $\mathcal{L}(K)\neq0$.
\end{example}

\begin{example}
Consider the virtual knot $K$ illustrated in Figure \ref{figure15}, which is a variant of the Kishino virtual knot. Direct calculation shows that $\mathcal{L}(K)=\widetilde{L}_{c_1}-\widetilde{L}_{c_2}-\widetilde{L}_{c_3}+\widetilde{L}_{c_4}\neq0$. The last inequality follows from the fact that neither of $\{\widetilde{L}_{c_i}\}_{1\leq i\leq4}$ is trivial, and any pair of them are inequivalent. One way to see this is to extend the writhe polynomial from virtual knots to virtual links, which can be found in the forthcoming paper \cite{Xu2018}.
\begin{figure}
\centering
\includegraphics{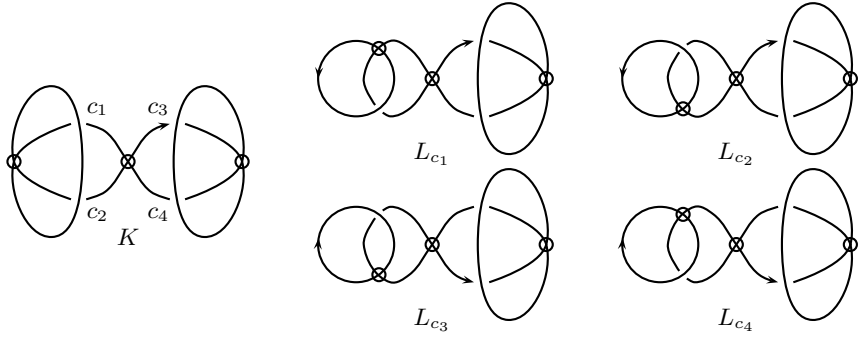}\\
\caption{A virtual knot $K$ with nontrivial $\mathcal{L}$}\label{figure15}
\end{figure}
\end{example}

\begin{remark}
We remark that there is no difficulty in extending this invariant $\mathcal{L}(K)$ to an invariant of virtual links. One just needs to make some suitable modifications on the definition of $\mathcal{M}_2$.
\end{remark}

\subsection{Geometric interpretations of $\mathcal{F}(K)$ and $\mathcal{L}(K)$}\quad

In this subsection we give a geometric interpretation for $\mathcal{F}(K)$. Recall that each virtual knot can be considered as an embedded curve in $\Sigma_g\times[0, 1]$, or equivalently, a knot diagram on $\Sigma_g$. By ignoring the over/undercrossing information of each crossing point, we can realize each flat virtual knot as a generic immersed closed curve on $\Sigma_g$. Here we follow the construction of abstract knots, which is known to be equivalent to the set of virtual knots \cite{Kam2000}, to construct a knot diagram on a closed oriented surface.

Let $K$ be a virtual knot diagram and $F(K)$ the shadow of $K$. Consider $F(K)$ as an abstract 4-valent graph as follows. If the number of flat crossing points equals zero, then the graph is just a circle. Otherwise, the set of vertices consists of all flat crossing points and each vertex has valency four. By ignoring all the virtual crossing points, we obtain a abstract 4-valent graph from $F(K)$. In particular, this is a 4-valent planar graph if there is no virtual crossing points. If there exist some virtual crossing points, this is an immersed 4-valent graph in the plane. After thickening vertices into disjoint small disks and thickening each edge into a ribbon connecting the corresponding two disks we obtain a compact oriented immersed surface in $R^2$, or an embedded surface in $R^3$. Let us use $\Sigma_K$ to denote this surface and $\Sigma_g$ the closed surface obtained from $\Sigma_K$ by attaching disks to the boundaries of $\Sigma_K$. It is evident to find that if the number of flat crossing points equals $n$ and the number of boundaries of $\Sigma_K$ equals $m$, then the genus $g$ of $\Sigma_g$ is equal to $\frac{m-n}{2}+1$. It is worth noting that this is the minimum genus among all the surfaces in which $\Sigma_K$ is embeddable. Replacing each flat crossing point with the original real crossing point of $K$ yields a knot diagram on the closed surface $\Sigma_g$.

Let us still use $K$ to denote the knot diagram on $\Sigma_g$. Choose a crossing point $c$, we can perform the 0-smoothing on the surface. After that, we obtain another unoriented knot diagram $K_c$ and the corresponding shadow $\widetilde{K}_c$. Now the invariant $\mathcal{F}(K)=\sum\limits_cw(c)\widetilde{K}_c-w(K)F(K)$ can be regarded as a linear combination of some unoriented flat virtual knot diagrams on $\Sigma_g$. Similarly, one can also consider the invariant $\mathcal{L}(K)$ in this way. The benefit of this geometric interpretation is, for some specialization (for instance the Example \ref{3.9} below), the invariant can be defined in terms of the homological intersection form $H_1(\Sigma_g)\times H_1(\Sigma_g)\rightarrow\mathbf{Z}$.

\subsection{Some special cases of $\mathcal{F}(K)$ and $\mathcal{L}(K)$}\label{3.3}\quad

In Subsection \ref{3.1} and  \ref{3.2}, we introduced two virtual knot invariants valued in $\mathcal{M}_1^u$ and $\mathcal{M}_2$ respectively. Combining them with a concrete flat virtual knot/link invariant we obtain a concrete and sometimes much more usable virtual knot invariant. In this subsection we show that several invariants appeared in the literature in recent years can be regarded as special cases of our invariants defined in Subsection \ref{3.1} and Subsection \ref{3.2}.

\begin{example}\label{3.9}
The first example is the writhe polynomial $W_K(t)$ introduced by the first and second author in \cite{Che2013}, which generalizes the index polynomial defined by Henrich in \cite{Hen2010} and the odd writhe polynomial defined by the first author in \cite{Che2014}. The writhe polynomial was also independently defined (with different names) by Dye in \cite{Dye2013}, Kauffman in \cite{Kau2013}, Im, Kim, Lee in \cite{Im2013}, and Satoh, Taniguchi in \cite{Sat2014}. The key point of the definition of the $W_K(t)$ is the chord index Ind$(c)$ introduced in \cite{Che2014}, which assigns an integer to each real crossing point of a virtual knot diagram and satisfies the chord index axioms in Definition \ref{axiom}. Here we follow the approach of Folwaczny and Kauffman \cite{Fol2013} with a little modifications.

Assume $K$ is a virtual knot diagram and $c$ is a real crossing point of it. By applying the 1-smoothing operation to $c$ we will get a 2-component virtual link $L_c=K_1\cup K_2$. The order of these two components is arranged as follows: consider the crossing point $c$ in Figure \ref{figure4}, if $c$ is positive then we call the component on the left side $K_1$ and the component on the right side $K_2$; conversely, if $c$ is negative then we use $K_1$ to denote the component on the right side and use $K_2$ to denote the component on the left side. Consider all the real crossing points between $K_1$ and $K_2$, which can be divided into two sets $C_{12}$ and $C_{21}$. Here $C_{12}$ denotes the set of real crossing points where the over-strands belong to $K_1$ and $C_{21}$ those the over-strands belong to $K_2$. Now we can define the \emph{index} of $c$ as
\begin{center}
Ind$(c)=\sum\limits_{c\in C_{12}}w(c)-\sum\limits_{c\in C_{21}}w(c)$
\end{center}
and the \emph{writhe polynomial} as
\begin{center}
$W_K(t)=\sum\limits_cw(c)t^{\text{Ind}(c)}-w(K)$.
\end{center}
According to the definition of the index, it is evident that Ind$(c)$ is invariant under the crossing change of any real crossing point of $L_c$. Hence it is an invariant of $\widetilde{L}_c$, and $W_K(t)$ is a specialization of $\mathcal{L}(K)$.

As we mentioned before, the writhe polynomial can be topologically defined via the homological intersection of the supporting surface $\Sigma_g$. As an application, by using this topological interpretation Boden, Chrisman and Gaudreau proved that the writhe polynomial is a concordance invariant of virtual knots. See \cite{Bod2018} for more details.
\end{example}

\begin{example}\label{3.14}
The second example concerns a sequence of 2-variable polynomial invariants $L_K^n(t, l)$ recently introduced by Kaur, Prabhakar and Vesnin in \cite{Kau2018}. We will show that this sequence of polynomial invariants combines some information of $\mathcal{F}(K)$ and $\mathcal{L}(K)$.

Let $K$ be a virtual knot diagram and $c$ a real crossing point of $K$. Assume the writhe polynomial $W_K(t)=\sum\limits_na_nt^n$, it is known that $W_K(t)-W_K(t^{-1})$ is a polynomial invariant of $F(K)$, see for example \cite{Tur2004,Che2016,Kau2018}. Now $W_K(t)-W_K(t^{-1})$ has the form $\sum\limits_n(a_n-a_{-n})t^n$, and the coefficient of $t^n$ equals $a_n-a_{-n}$, which is called the \emph{$n$-th dwrithe} in \cite{Kau2018}, is also a flat virtual knot invariant. Following \cite{Kau2018}, let us use $\nabla J_n(K)$ to denote $a_n-a_{-n}$. Now we can assign an index $t^{\text{Ind}(c)}l^{|\nabla J_n(K_c)|}$ to $c$, as before here $K_c$ is referred to the (unoriented) virtual knot obtained from $K$ by applying 0-smoothing at $c$. In order to calculate $\nabla J_n(K_c)$ one needs to choose an orientation for $K_c$, but the abstract value guarantees that the index is well defined. It is worth noting that this index also satisfies the chord axioms, and if a crossing point is involved in $\Omega_1$ then the index equals $l^{|\nabla J_n(K)|}$. Now the polynomials $L_K^n(t, l)$ are defined as below
\begin{center}
$L_K^n(t, l)=\sum\limits_cw(c)t^{\text{Ind}(c)}l^{|\nabla J_n(K_c)|}-w(K)l^{|\nabla J_n(K)|}$.
\end{center}
Some examples are given in \cite{Kau2018} to show that $L_K^n(t, l)$ can distinguish two virtual knots with the same writhe polynomial. As we mentioned above, Ind$(c)$ is a flat virtual knot invariant of $L_c$ and $|\nabla J_n(K_c)|$ is a flat virtual knot invariant of $K_c$, hence $L_K^n(t, l)$ can be viewed as a mixture of some information coming from $\mathcal{F}(K)$ and $\mathcal{L}(K)$.
\end{example}

We end this subsection with a non-example. We will give an index type virtual knot invariant and prove that this index cannot be obtained from $\mathcal{F}(K)$ or $\mathcal{L}(K)$.

\begin{example}\label{3.18}
Before the discovery of Ind$(c)$, it was first noted by Kauffman that the parity of Ind$(c)$ plays an important role in virtual knot theory \cite{Kau2004}. Later Manturov proposed the parity axioms in \cite{Man2010} and proved that the parity projection $P_2$ which maps a virtual knot diagram to another virtual knot diagram is well defined. Here the projection $P_2$ is defined as follows: assume we have a virtual knot diagram $K$, then we can assign an index Ind$(c)$ to a real crossing point $c$ (see Example \ref{3.9}). After replacing all the real crossing points that have odd indices with virtual crossing points, we obtain a new virtual knot diagram $P_2(K)$. The key point is that, if $K$ and $K'$ are related by finitely many generalized Reidemeister moves, so are $P_2(K)$ and $P_2(K')$. It means that $P_2$ defines a well defined projection from virtual knots to virtual knots.

This projection can be easily generalized to a sequence of projections $P_n$ for any nonnegative integer $n$. Let $K$ be a virtual knot diagram, we define $P_n(K)$ to be the virtual knot diagram obtained from $K$ by replacing all real crossing points whose indices are not multiples of $n$ with virtual crossing points. From the viewpoint of Gauss diagram, one just leaves all the chords whose indices are multiples of $n$ and delete all other chords. It was proved that in $\Omega_3$ the index of one crossing point equals the sum of the indices of the other two crossing points \cite{Che2017}. With this fact, it is not difficult to observe that $P_n$ provides a well defined projection from the set of virtual knots to itself. For example, $P_1=id$ and $P_2$ coincides with Manturov's parity projection mentioned above. Now each $P_n(K)$ can be regarded as a virtual knot invariant of $K$, and all $\{P_n(K)\}$ together provides a sequence of virtual knot invariants. Note that for each $K$, only finitely many of $\{P_n(K)\}$ are nontrivial. For example, if $K$ is a classical knot, then $P_n(K)=K$ for any nonnegative integer $n$. For a given virtual knot invariant $f$, the sequence $\{f(P_n(K))\}$ gives rise to a family of virtual knot invariants. In particular, one can consider $P_{n_k}\circ P_{n_{k-1}}\circ\cdots\circ P_{n_1}(K)$ for any nonnegative sequence $(n_1, \cdots, n_k)$. As another example, the case of $f=W_K(t)$ was discussed by Im and Kim in \cite{Im2017}.

Let us consider $W_{P_0(K)}(t)$. We are planning to show that the index of this invariant, denotes by Ind$_0(c)$, cannot be recovered from $\widetilde{K}_c$ or $\widetilde{L}_c$. First, we need to reinterpret $W_{P_0(K)}(t)$ as an index type polynomial. Let $K$ be a virtual knot diagram and $C_0(K)$ the set of real crossing points with index zero. We use $G(K)$ to denote the Gauss diagram of $K$ and use $G_0(K)$ to denote the Gauss diagram obtained by removing all chords with nonzero indices. Choose a crossing point $c\in C_0(K)$, we will also use the same symbol to denote the corresponding chord in $G_0(K)$. Let $c_+, c_-$ be the two endpoints of the chord $c$, such that the chord is directed from $c_+$ to $c_-$. For any other chord in $G_0(K)$, we also associate $\pm1$ to the endpoints of it such that $+1$ $(-1)$ corresponds to the overcrossing (undercrossing) point. Now $c_+$ and $c_-$ divides the big circle of $G_0(K)$ into two open arcs, one is directed from $c_+$ to $c_-$ and the other one is directed from $c_-$ to $c_+$. Let us use $\widehat{c_+c_-}$ and $\widehat{c_-c_+}$ to denote these two open arcs respectively. Now the chords in $C_0(K)$ which have nonempty intersections with $c$ can be divided into two sets $C_c^{\rightarrow}$ and $C_c^{\leftarrow}$, where $c'\in C_c^{\rightarrow}$ $(C_c^{\leftarrow})$ if and only if $c'_+\in\widehat{c_+c_-}$ $(\widehat{c_-c_+})$ and $c'_-\in\widehat{c_-c_+}$ $(\widehat{c_+c_-})$. Now the index Ind$_0(c)$ can be defined as
\begin{center}
Ind$_0(c)=\sum\limits_{c'\in C_c^{\rightarrow}}w(c')-\sum\limits_{c'\in C_c^{\leftarrow}}w(c')$.
\end{center}
One can check that this definition coincides with the index (in the sense of Example \ref{3.9}) of $c$ in the virtual knot corresponding the $G_0(K)$. It follows that
\begin{center}
$W_{P_0(K)}(t)=\sum\limits_{c\in C_0(K)}w(c)(t^{\text{Ind}_0(c)}-1)$.
\end{center}

\begin{figure}
\centering
\includegraphics{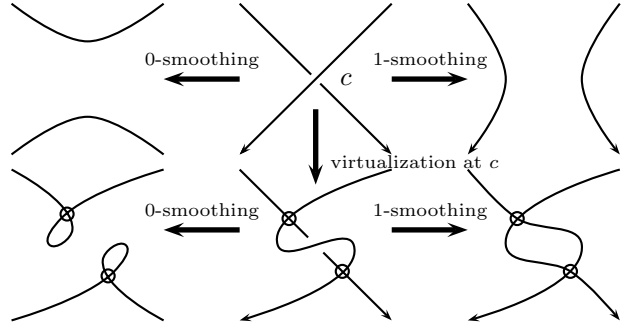}\\
\caption{Virtualization at $c$}\label{figure11}
\end{figure}
In order to show that Ind$_0(c)$ cannot be recovered from $\widetilde{K}_c$ or $\widetilde{L}_c$, it suffices to construct two virtual knot diagrams $K$ and $K'$, such that there exist two real crossing points $c\in K, c'\in K'$ which satisfy $\widetilde{K_c}=\widetilde{K'_{c'}}, \widetilde{L_c}=\widetilde{L'_{c'}}$ but Ind$_0(c)\neq\text{Ind}_0(c')$. The key observation is that $\widetilde{K_c}$ and $\widetilde{L_c}$ are both preserved under a virtualization at $c$, see Figure \ref{figure11}. We remark that the virtualization operation was first used by Kauffman to construct infinitely many nontrivial virtual knots with unit Jones polynomial. Although the virtualization operation at $c$ preserves both $\widetilde{K_c}$ and $\widetilde{L_c}$, it usually changes the indices of other crossing points and hence changes Ind$_0(c)$. As a concrete example, consider the two Gauss diagrams $G(K)$ and $G(K')$ depicted in Figure \ref{figure12}. One can easily find that $\widetilde{K_c}=\widetilde{K'_{c'}}$ and $\widetilde{L_c}=\widetilde{L'_{c'}}$, but Ind$_0(c)=1\neq0=\text{Ind}_0(c')$.
\begin{figure}
\centering
\includegraphics{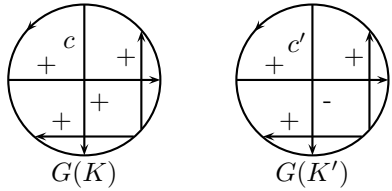}\\
\caption{$G(K)$ and $G(K')$}\label{figure12}
\end{figure}
\end{example}

The main reason why Ind$_0(c)$ cannot be recovered from $\widetilde{K}_c$ or $\widetilde{L}_c$ is that the notion of chord index is twice used in the definition of Ind$_0(c)$. This idea was first used by Turaev in flat virtual knot theory \cite{Tur2004} and later by M.-J. Jeong in the definition of zero polynomial for virtual knots \cite{Jeo2016}. Recently the zero polynomial was extended to a transcendental function invariant of virtual knots in \cite{Che2017}.

\section{New invariants revisited from the viewpoint of finite type invariant}\label{section4}
Recall that a finite type invariant (also called Vassiliev invariant) of degree $n$ is a (virtual) knot invariant valued in an abelian group which vanishes on all singular knots with $k$ singularities provided that $k\geq n+1$. More precisely, for a given abelian group $A$, if $f:K\rightarrow A$ is a virtual knot invariant which associates each virtual knot with an element of $A$. Then we can extend $f$ from virtual knots to singularities virtual knots via the following recursive relation
\begin{center}
$f^{n}(K)=f^{n-1}(K_+)-f^{n-1}(K_-)$,
\end{center}
here $K_+(K_-)$ is obtained from $K$, a singular virtual knot with $n$ singularities, by replacing a singular point with a positive (negative) crossing point. In particular, we set $f^{(0)}=f$ as the initial condition. Now we say $f$ is a \emph{finite type invariant of degree $n$} if $f^{(n+1)}(K)=0$ for any singular virtual knot $K$ with $n+1$ singularities, and there exists a singular virtual knot $K$ with $n$ singularities which satisfies $f^{(n)}(K)\neq0$. For classical knots, this definition coincides with the definition given by Birman and Lin in \cite{Bir1993}. We would like to remark that for virtual knots there exists another kind of finite type invariants. We refer the reader to \cite{Gou2000} for more details.

The main result of this section is the following:
\begin{theorem}
Both $\mathcal{F}(K)$ and $\mathcal{L}(K)$ are finite type invariants of degree one.
\end{theorem}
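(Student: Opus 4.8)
The plan is to compute the first and second ``derivatives'' of each invariant directly from its defining formula, show that the second vanishes identically (degree at most one), and exhibit singular knots on which the first is nonzero (degree at least one).

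First I would examine a single crossing change. Fix a virtual knot diagram and a distinguished real crossing $c_0$, and write $K_+,K_-$ for the two diagrams that agree everywhere except that $c_0$ is positive, respectively negative. They share the same shadow, so $F(K_+)=F(K_-)=:F(K)$. Moreover, for every crossing $c$ the flat knot $\widetilde{K}_c$ is built by smoothing at $c$ and then discarding all over/under data, so the over/under datum at $c_0$ is always lost: it is smoothed away when $c=c_0$, and erased by passing to the shadow when $c\neq c_0$. Hence $\widetilde{(K_+)}_c=\widetilde{(K_-)}_c=:\widetilde{K}_c$ for every $c$. Since $w(c)$ is unchanged for all $c\neq c_0$, while the sign of $c_0$ contributes $+1$ to $K_+$ and $-1$ to $K_-$ (so both the $c_0$ term and $w(K)$ change by $2$), subtracting the two defining sums kills every term with $c\neq c_0$ and yields
\[
\mathcal{F}^{(1)}(K)=\mathcal{F}(K_+)-\mathcal{F}(K_-)=2\bigl(\widetilde{K}_{c_0}-F(K)\bigr),
\]
where $\widetilde{K}_{c_0}$ is now the shadow of the $0$-smoothing at the (formerly singular) point $c_0$. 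The identical computation gives $\mathcal{L}^{(1)}(K)=2\bigl(\widetilde{L}_{c_0}-F(K)\cup U\bigr)$.

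The decisive observation is that these right-hand sides depend on the singular diagram only through its oriented shadow: $\widetilde{K}_{c_0}$, $F(K)$, $\widetilde{L}_{c_0}$, and $F(K)\cup U$ are all shadows, hence insensitive to the over/under information at every ordinary crossing. Consequently $\mathcal{F}^{(1)}$ and $\mathcal{L}^{(1)}$ are unchanged under a crossing change at any crossing other than $c_0$. For a singular knot with two double points $c_0,c_1$ the second derivative is precisely such a difference, $\mathcal{F}^{(2)}(K)=\mathcal{F}^{(1)}(K|_{c_1=+})-\mathcal{F}^{(1)}(K|_{c_1=-})$, so it vanishes; likewise $\mathcal{L}^{(2)}\equiv 0$. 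This establishes degree at most one. To pin the degree at exactly one I would exhibit single-singularity witnesses. For $\mathcal{L}$ the virtual trefoil suffices: declaring one of its two crossings singular gives $\mathcal{L}^{(1)}=2\bigl(\widetilde{HL}-U\cup U\bigr)$, which is nonzero because $lk(\widetilde{HL})=1$ (Example \ref{3.11}). For $\mathcal{F}$ the trefoil is useless, since both its shadow and the relevant smoothing are trivial; instead I would take a Kishino virtual knot diagram with one of its four crossings declared singular. Then $F(K)$ is the Kishino flat virtual knot, whereas $\widetilde{K}_{c_0}$ has at most three flat crossings, fewer than the minimal flat crossing number (four) of the Kishino flat knot, so $\widetilde{K}_{c_0}\neq F(K)$ and $\mathcal{F}^{(1)}\neq 0$.

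The one genuinely delicate point, and my expected main obstacle, is the identity $\widetilde{(K_+)}_c=\widetilde{(K_-)}_c$. It requires knowing that the choice between $0$- and $1$-smoothing is dictated by the orientation and the component count rather than by the sign of the crossing, so that a crossing change at $c_0$ really does wash out after passing to the shadow. This is exactly the invariance already verified case by case in the proof of Theorem \ref{theorem3.1}, so I would invoke it rather than reprove it; everything else is the sign bookkeeping displayed above.
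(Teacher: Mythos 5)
Your proposal is correct, and while it rests on the same underlying cancellation as the paper's proof, it packages the argument differently and more conceptually. The paper proves $\mathcal{F}^{(2)}=0$ by brute force, expanding the four-term alternating sum $\mathcal{F}(K_{++})-\mathcal{F}(K_{+-})-\mathcal{F}(K_{-+})+\mathcal{F}(K_{--})$ and cancelling in pairs, then only sketches the $\mathcal{L}$ case (``by mimicking the proof''). You instead derive closed formulas for the first derivatives, $\mathcal{F}^{(1)}(K)=2\bigl(\widetilde{K}_{c_0}-F(K)\bigr)$ and $\mathcal{L}^{(1)}(K)=2\bigl(\widetilde{L}_{c_0}-F(K)\cup U\bigr)$, observe that both factor through the shadow of the singular diagram, and get the vanishing of both second derivatives in one line, uniformly for $\mathcal{F}$ and $\mathcal{L}$. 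This buys a genuinely cleaner argument: it isolates the reason the cancellation happens (the chord index is insensitive to switching other crossings), which is precisely the principle the paper states informally only after its proof. Your nontriviality witnesses coincide with the paper's: the Kishino diagram with one singular crossing for $\mathcal{F}$, and the singular virtual trefoil for $\mathcal{L}$, where your $2\bigl(\widetilde{HL}-U\cup U\bigr)$ agrees with Example \ref{3.11} and is detected by the flat linking number exactly as there.

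Two small repairs are needed, neither fatal. First, to show $\widetilde{K}_{c_0}\neq F(K)$ for the Kishino witness you invoke a minimal flat crossing number of four for the Kishino flat knot; this is not established in the paper and is not automatic, since nontrivial flat virtual knots with three flat crossings do exist (e.g.\ $\widetilde{K_{2,1}}$ of Example \ref{5.3}), so a three-crossing smoothing is not a priori distinct from $\widetilde{\text{Kishino}}$ on crossing-count grounds alone. The paper's route is safer and just as short: direct computation shows each $0$-smoothing of the Kishino diagram is the unknot (this is what produces the $4U$ term in the paper's calculation), so $\widetilde{K}_{c_0}=U\neq\widetilde{\text{Kishino}}$ by nontriviality of the Kishino flat knot, and your formula gives $\mathcal{F}^{(1)}=2U-2\widetilde{\text{Kishino}}\neq0$, matching the paper. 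Second, you attribute the key identity $\widetilde{(K_+)}_c=\widetilde{(K_-)}_c$ to the case analysis in Theorem \ref{theorem3.1}, but that theorem verifies invariance under generalized Reidemeister moves, not under crossing changes; the correct justification is the definitional one you in fact already gave in your first paragraph, namely that the choice of $0$- versus $1$-smoothing is dictated by component count and orientation rather than by the sign of the crossing, and the shadow erases the over/under datum at $c_0$. With these citations straightened out, your proof is complete.
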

\begin{proof}
We first prove that $\mathcal{F}(K)$ is a finite type invariant of degree one. For this purpose, we need to show that $\mathcal{F}^{(2)}$ vanishes on any singular virtual knot with two singularities and there is a singular virtual knot with one singularity which has nontrivial $\mathcal{F}^{(1)}$.

Let $K$ be a virtual knot diagram and $c_1, c_2$ be two real crossing points. Without loss of generality, we assume that $w(c_1)=w(c_2)=+1$. We use $K_{-+}(K_{+-})$ to denote the virtual knot diagram obtained from $K$ by switching $c_1(c_2)$, and use $K_{--}$ to denote the diagram obtained from $K$ by switching both $c_1$ and $c_2$. Obviously, $K_{++}=K$. What we need to do is proving that $\mathcal{F}(K_{++})-\mathcal{F}(K_{+-})-\mathcal{F}(K_{-+})+\mathcal{F}(K_{--})=0$.

One computes
\begin{flalign*}
&\mathcal{F}(K_{++})-\mathcal{F}(K_{+-})-\mathcal{F}(K_{-+})+\mathcal{F}(K_{--})\\
=&(\sum\limits_cw(c)\widetilde{K_{++}}_c-w(K_{++})F(K_{++}))-(\sum\limits_cw(c)\widetilde{K_{+-}}_c-w(K_{+-})F(K_{+-}))\\
&-(\sum\limits_cw(c)\widetilde{K_{-+}}_c-w(K_{-+})F(K_{-+}))+(\sum\limits_cw(c)\widetilde{K_{--}}_c-w(K_{--})F(K_{--}))\\
=&\sum\limits_cw(c)\widetilde{K_{++}}_c-\sum\limits_cw(c)\widetilde{K_{+-}}_c-\sum\limits_cw(c)\widetilde{K_{-+}}_c+\sum\limits_cw(c)\widetilde{K_{--}}_c\\
=&(\widetilde{K_{++}}_{c_1}+\widetilde{K_{++}}_{c_2})-(\widetilde{K_{+-}}_{c_1}-\widetilde{K_{+-}}_{c_2})-(-\widetilde{K_{-+}}_{c_1}+\widetilde{K_{-+}}_{c_2})+(-\widetilde{K_{--}}_{c_1}-\widetilde{K_{--}}_{c_2})\\
=&(\widetilde{K_{++}}_{c_1}-\widetilde{K_{--}}_{c_1})+(\widetilde{K_{++}}_{c_2}-\widetilde{K_{--}}_{c_2})+(\widetilde{K_{-+}}_{c_1}-\widetilde{K_{+-}}_{c_1})+(\widetilde{K_{+-}}_{c_2}-\widetilde{K_{-+}}_{c_2})\\
=&0+0+0+0\\
=&0
\end{flalign*}

On the other hand, consider the Kishino virtual knot $K$ which is obtained from the Kishino flat virtual knot in Figure \ref{figure3} by replacing all flat crossing points with positive crossing points. Denote the positive crossing point on the top left of $K$ by $c$. We still use $K_+$ to denote $K$ and use $K_-$ to denote the diagram after switching $c$. Then we have
\begin{flalign*}
&\mathcal{F}(K_{+})-\mathcal{F}(K_{-})\\
=&(\sum\limits_cw(c)\widetilde{K_{+}}_c-w(K_{+})F(K_{+}))-(\sum\limits_cw(c)\widetilde{K_{-}}_c-w(K_{-})F(K_{-}))\\
=&(4U-4\widetilde{\text{Kishino}})-(2U-2\widetilde{\text{Kishino}})\\
=&2U-2\widetilde{\text{Kishino}}\\
\neq&0
\end{flalign*}

For $\mathcal{L}(K)$, by mimicking the proof of $\mathcal{F}(K)$, one can also prove that $\mathcal{L}^{(2)}(K)$ vanishes on any singular virtual knot with two singular points. On the other hand, as a specialization of $\mathcal{L}(K)$, we know that the writhe polynomial $W_K(t)$ is a finite type invariant of degree one \cite{Dye2013,Che2016}. Hence there exists a singular virtual knot with one singular point which has nontrivial $\mathcal{L}(K)$. As a concrete example, one can consider the singular virtual trefoil, which can be obtained from the virtual trefoil in Figure \ref{figure2} by replacing one real crossing point with a singular point.
\end{proof}

The proof of the theorem above tells us that if a chord index of a crossing point is preserved when one switches any other crossing point, then this chord index cannot provide a nontrivial invariant for classical knots. Since for classical knots, there is no finite type invariant of degree one.

We know that the writhe polynomial is a finite type invariant of degree one. However, one can use the notion of chord index to define an elaborate virtual knot invariant which is of degree two. See \cite{Chr2014} for an example. We have shown that the writhe polynomial $W_K(t)$ can be recovered from $\mathcal{L}(K)$. At present, we have no idea whether it is possible to define a finite type invariant of higher degree which takes values in $\mathcal{M}_1^u$ or $\mathcal{M}_2$.

\section{From flat virtual knots to flat virtual knots}\label{section5}
In Section \ref{section3} we introduced two virtual knot invariants $\mathcal{F}(K)$ and $\mathcal{L}(K)$, which take values in $\mathcal{M}_1^u$ and $\mathcal{M}_2$ respectively. However, in general it is still not easy to distinguish two elements in $\mathcal{M}_1^u$ or $\mathcal{M}_2$. To this end, in this section we extend the main idea of Section \ref{section3} from virtual knots to flat virtual knots. It turns out that we can define two invariants for flat virtual knots, which take values in $\mathcal{M}_1$ and $\mathcal{M}_2$ respectively. Here $\mathcal{M}_1$ denotes the free $\mathbf{Z}$-module generated by all oriented flat virtual knots, and as before, $\mathcal{M}_2$ is referred to the free $\mathbf{Z}$-module generated by all oriented 2-component flat virtual links.

Given an oriented flat virtual knot diagram $\widetilde{K}$, similar to virtual knots, we can define a Gauss diagram $G(\widetilde{K})$ corresponding to $\widetilde{K}$. At first glance, since there is no over/undercrossing information we can just connect the two preimages of each flat crossing with a chord. So unlike the Gauss diagrams of virtual knots, now there is no directions or signs on each chord. However, we can still assign a direction to each chord as follows: replace each flat crossing point with a positive crossing point and then add an arrow to each chord in $G(\widetilde{K})$, directed from the preimage of the overcrossing to the preimage of the undercrossing. Now we obtain a Gauss diagram $G(\widetilde{K})$ of which each chord has a direction but no signs. The following result is well-known, see for example \cite{Tur2004}.

\begin{lemma}
Each $G(\widetilde{K})$ corresponds to a unique flat virtual knot $\widetilde{K}$.
\end{lemma}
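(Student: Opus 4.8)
The plan is to establish a bijection between the set of flat virtual knots and the set of Gauss diagrams with directed (unsigned) chords, by exhibiting the analogue for flat virtual knots of the classical realization theorem of Kauffman and Goussarov--Polyak--Viro (cited in the excerpt as \cite{Kau1999, Gou2000}). The statement has two halves that I would treat separately: every directed unsigned Gauss diagram $G$ is realized by some flat virtual knot diagram, and any two flat virtual knot diagrams with the same Gauss diagram $G(\widetilde{K})$ represent the same flat virtual knot.

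First I would address realizability. Given an abstract Gauss diagram $G$ with directed chords, I embed the core circle in the plane and, for each chord, connect its two endpoints by an immersed arc inside the disk; transversal intersections of these arcs that are forced by the combinatorics of the chord pattern are declared virtual crossings, while the chord endpoints themselves become flat crossings. Since we are free to route the connecting arcs with as many virtual crossings as needed, this produces a flat virtual knot diagram $\widetilde{K}$ whose associated Gauss diagram is exactly $G$. The direction on each chord is recorded consistently with the convention fixed just before the lemma (replace each flat crossing by a positive crossing and orient the chord from overcrossing to undercrossing).

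Second, and this is where the real content lies, I would prove well-definedness: if $\widetilde{K}$ and $\widetilde{K}'$ have the same directed Gauss diagram, then they are related by flat generalized Reidemeister moves. The cleanest route is to import the corresponding classical result for virtual knots. For a flat virtual knot diagram $\widetilde{K}$, replace every flat crossing by a \emph{positive} real crossing to obtain a genuine virtual knot diagram $P(\widetilde{K})$; by the chosen direction convention, the (signed) Gauss diagram of $P(\widetilde{K})$ has all signs positive and records precisely the directed data of $G(\widetilde{K})$. Thus $G(\widetilde{K}) = G(\widetilde{K}')$ implies $P(\widetilde{K})$ and $P(\widetilde{K}')$ have identical signed Gauss diagrams, so by \cite{Kau1999, Gou2000} they are equivalent as virtual knots. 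Finally I would check that each generalized Reidemeister move connecting $P(\widetilde{K})$ to $P(\widetilde{K}')$ descends, under flattening of the real crossings, to a flat generalized Reidemeister move (an $\Omega_i$ move projects to $F\Omega_i$, an $\Omega_i'$ to $F\Omega_i'$, and $\Omega_3^v$ to $F\Omega_3^v$, with virtual moves unchanged), so the chain of moves flattens to a sequence of flat moves between $\widetilde{K}$ and $\widetilde{K}'$.

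The main obstacle is the compatibility check in the last step: one must verify that the positive-crossing choice used to define the direction is preserved throughout the equivalence, i.e. that the realization of $G$ as a virtual knot really is forced to have all crossings positive and that flattening respects the move-by-move reduction. The subtlety is that the classical realization theorem is stated for virtual knots with both over/under and sign data, whereas here the sign data is an artifact of a convention rather than intrinsic; I would need to confirm that fixing all signs positive does not lose any flat information and that no flat move is obstructed by this sign normalization. Once this bookkeeping is in place the lemma follows, which is why the excerpt can legitimately call the result well-known and defer to \cite{Tur2004}.
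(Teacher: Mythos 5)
Your proposal is correct and follows essentially the same route as the paper: lift $G(\widetilde{K})$ to an all-positive signed Gauss diagram, invoke the realization result of \cite{Kau1999, Gou2000} to relate any two virtual diagrams realizing it, and flatten the resulting chain of moves. The only difference is that the paper cites the sharper form of that result---two virtual knot diagrams with the \emph{same} Gauss diagram are connected by $\Omega_1'$, $\Omega_2'$, $\Omega_3'$ and $\Omega_3^v$ moves alone, which never touch a real crossing---so the sign-bookkeeping ``obstacle'' you flag at the end evaporates: no move in the chain alters or even involves the positive crossings, and in any case the flattening $\Omega_i\mapsto F\Omega_i$ is insensitive to signs, as the paper already records in Section \ref{section2}.
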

\begin{proof}
Adding a positive sign to each chord of $G(\widetilde{K})$ gives us a Gauss diagram $G(K)$ of a positive virtual knot diagram $K$. By replacing all the positive crossing points of $K$ with flat crossing points we will obtain a flat virtual knot $\widetilde{K}$. However, $G(K)$ corresponds to infinitely many different virtual knot diagrams. We need to show that the shadow of them give rise to the same flat virtual knot. Actually, choose another virtual knot diagram $K'$ which also corresponds to $G(K)$. It is known that $K$ and $K'$ can be connected by finitely many $\Omega_1', \Omega_2', \Omega_3'$ and $\Omega_3^v$ moves. It follows that the shadows $F(K)$ and $F(K')$ can be connected by a sequence of $F\Omega_1', F\Omega_2', F\Omega_3', F\Omega_3^v$ moves. This completes the proof.
\end{proof}

Although there is no signs on flat crossing point, we can use the sign of the index to make up for that. More precisely, for a given flat virtual knot diagram $\widetilde{K}$, let us consider the positive virtual knot diagram over it, say $K^+$. Now we can use the index of Example \ref{3.9} to assign an integer Ind$(c)$ to each positive crossing point of $K^+$. For the original flat crossing point $c$ (for simplicity here we use the same symbol) in $\widetilde{K}$, we define the writhe of the flat crossing point $c$ as follows
\begin{center}
$w(c)=\text{sgn}(\text{Ind}(c))=
\begin{cases}
\frac{|\text{Ind}(c)|}{\text{Ind}(c)}& \text{if }\text{Ind}(c)\neq0;\\
0& \text{if }\text{Ind}(c)=0.
\end{cases}$
\end{center}
With the help of writhe on each flat crossing points, one can similarly define the writhe of a flat virtual knot as $w(\widetilde{K})=\sum\limits_cw(c)$, where the sum runs over all flat crossing points.

\begin{lemma}\label{lemma5.2}
$w(\widetilde{K})$ is a flat virtual knot invariant.
\end{lemma}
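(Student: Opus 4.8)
The plan is to avoid a move-by-move verification and instead to identify $w(\widetilde{K})$ with a sum of dwrithe coefficients, which are already known (Example \ref{3.14}) to be flat virtual knot invariants. Concretely, I would run everything through the writhe polynomial of the positive lift $K^+$, which is the canonical virtual knot diagram associated to $\widetilde{K}$ and the diagram in which $\text{Ind}(c)$ is by definition computed.

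First I would rewrite the definition in terms of $W_{K^+}(t)$. Let $\widetilde{K}$ have $n$ flat crossings, so that every crossing of $K^+$ has ordinary writhe $+1$ and $w(K^+)=n$; the formula of Example \ref{3.9} then gives
\[
W_{K^+}(t) = \sum_c t^{\text{Ind}(c)} - n,
\]
the sum running over the crossings $c$ of $K^+$, equivalently the flat crossings of $\widetilde{K}$. Hence
\[
W_{K^+}(t) - W_{K^+}(t^{-1}) = \sum_c \left( t^{\text{Ind}(c)} - t^{-\text{Ind}(c)} \right),
\]
so that for each $k\geq 1$ the dwrithe $\nabla J_k(K^+)$, being the coefficient of $t^k$, equals $\#\{c:\text{Ind}(c)=k\}-\#\{c:\text{Ind}(c)=-k\}$.

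Next I would sum these over all $k\geq 1$, which collects the positive and negative index contributions:
\[
\sum_{k\geq 1}\nabla J_k(K^+) = \#\{c:\text{Ind}(c)>0\} - \#\{c:\text{Ind}(c)<0\} = \sum_c \text{sgn}(\text{Ind}(c)) = w(\widetilde{K}).
\]
The crossings with $\text{Ind}(c)=0$ contribute $\text{sgn}(0)=0$ and drop out, and the sum is finite since $\widetilde{K}$ has finitely many crossings. Finally, since $W_K(t)-W_K(t^{-1})$ depends only on the shadow $F(K)$, each coefficient $\nabla J_k$ is a flat virtual knot invariant; as $F(K^+)=\widetilde{K}$, the finite integer combination $w(\widetilde{K})=\sum_{k\geq 1}\nabla J_k(\widetilde{K})$ is itself a flat virtual knot invariant, which is the claim.

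The main obstacle is exactly what this reduction circumvents, namely a direct check against the flat generalized Reidemeister moves. The moves $F\Omega_1$, $F\Omega_3$, $F\Omega_3^v$ and the purely virtual moves are routine, because their positive lifts are genuine $\Omega_1$, $\Omega_3$, $\Omega_3^v$ (resp.\ virtual) moves, so one may invoke the chord index axioms for $\text{Ind}(c)$ together with $\text{sgn}(0)=0$ for the $F\Omega_1$ curl. The difficulty is $F\Omega_2$: its positive lift has two \emph{positive} crossings, whereas a genuine $\Omega_2$ pair carries one positive and one negative crossing, so axiom (2) does not apply and one cannot conclude directly that the two new crossings cancel; moreover, inserting the bigon can change the indices of the remaining crossings. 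Tracking all of this by hand is awkward, and it is precisely this global bookkeeping that the identity $w(\widetilde{K})=\sum_{k\geq 1}\nabla J_k$ folds into the already-established flat invariance of $W_K(t)-W_K(t^{-1})$.
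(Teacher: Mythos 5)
Your proof is correct, and it takes a genuinely different route from the paper's. The paper proves Lemma \ref{lemma5.2} by a direct move-by-move check: the flat crossing point involved in $F\Omega_1$ has writhe zero, the two flat crossing points involved in $F\Omega_2$ have opposite writhes, and the writhe of each flat crossing point is preserved under $F\Omega_3$ and $F\Omega_3^v$ --- with an explicit caveat about precisely the subtlety you flag, namely that the positive lifts of two flat diagrams related by a flat move need not be related by the genuine move (e.g.\ the lift of an $F\Omega_2$ pair consists of two positive crossings), so the chord index axioms cannot simply be invoked and the indices must be tracked directly. You instead fold all of this bookkeeping into the identity $w(\widetilde{K})=\sum_{k\geq 1}\nabla J_k(K^+)$ for the positive lift $K^+$, and then appeal to the flat invariance of $W_K(t)-W_K(t^{-1})$, a fact the paper itself records in Example \ref{3.14} with references \cite{Tur2004,Che2016,Kau2018}; since that fact is imported from the literature rather than derived from the lemma, there is no circularity, and your intermediate steps (all crossings of $K^+$ positive so the constant terms cancel in $W_{K^+}(t)-W_{K^+}(t^{-1})$, the coefficient of $t^k$ being $\#\{c:\mathrm{Ind}(c)=k\}-\#\{c:\mathrm{Ind}(c)=-k\}$, index-zero crossings contributing $\mathrm{sgn}(0)=0$, finiteness of the sum) all check out against the definitions in Example \ref{3.9}. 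What your reduction buys is brevity and a conceptual identification of $w(\widetilde{K})$ as a total dwrithe. What the paper's elementary check buys is self-containedness and, more importantly, \emph{local} information: the proof of the subsequent theorem on the invariance of $\widetilde{\mathcal{F}}$ and $\widetilde{\mathcal{L}}$ explicitly reuses the behavior of the writhe of each individual flat crossing point under each flat move, and your global argument establishes only the invariance of the summed quantity, not those crossing-by-crossing statements. So your proof is a valid substitute for the lemma itself, but the paper's local analysis (or an equivalent of it) would still be needed later in Section \ref{section5}.
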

\begin{proof}
It is sufficient to notice that the flat crossing point involved in $F\Omega_1$ has writhe zero, the two flat crossing points involved in $F\Omega_2$ have opposite writhes and the writhe of each flat crossing point in $F\Omega_3$ is preserved under $F\Omega_3$. It is worth pointing out that it is possible that even if two flat virtual knot diagrams are related by a $F\Omega_3$ move, the corresponding positive virtual knot diagrams cannot be connected by a $\Omega_3$ move. However, in this case the writhe of each flat crossing point is still preserved. Obviously, the writhe of the flat crossing point in $F\Omega_3^v$ is also preserved under $F\Omega_3^v$. The proof is finished.
\end{proof}

\begin{example}\label{5.3}
As an example, let us consider the flat virtual knot $\widetilde{K_{p,q}}$ $(p, q\in\mathbf{N})$ whose lattice-like Gauss diagram consists of $p$ parallel horizontal chords directed from left to right and $q$ parallel vertical chords directed from top to bottom, such that each horizontal chord and each vertical chord have one intersection point. Clearly, every horizontal chord has index $-q$ and every vertical chord has index $p$. Thus, if $p\neq q$ we have $w(\widetilde{K_{p,q}})=-p+q\neq0$ and therefore $\widetilde{K_{p,q}}$ is nontrivial. We remark that if $p=q$ then $w(\widetilde{K_{p,q}})=0$, but in this case the flat virtual knot $\widetilde{K_{p,q}}$ is also nontrivial \cite{Tur2004}.
\end{example}

\begin{remark}
Replacing the writhe with index, one can define Ind$(\widetilde{K})=\sum\limits_c\text{Ind}(c)$. It turns out that Ind$(\widetilde{K})$ also defines an invariant for flat virtual knots. Actually, this is a trivial invariant, i.e. Ind$(\widetilde{K})=0$ for any flat virtual knot $\widetilde{K}$. One can easily conclude this from the definition of Ind$(c)$.
\end{remark}

Now we want to enhance the invariant $w(\widetilde{K})$ to a weighted sum by associating a chord index to each flat crossing point of $\widetilde{K}$. Let $c$ be a flat crossing point of $\widetilde{K}$. Similar as before, we can perform 0-smoothing or 1-smoothing to resolve this flat crossing point to get a unoriented flat virtual knot $\widetilde{K}_c$ or an oriented 2-component flat virtual link $\widetilde{L}_c$. In particular, for $\widetilde{K}_c$ now we fix an orientation on it, see Figure \ref{figure14}.

We define $\widetilde{\mathcal{F}}(\widetilde{K})=\sum\limits_cw(c)\widetilde{K}_c\in\mathcal{M}_1$ and $\widetilde{\mathcal{L}}(\widetilde{K})=\sum\limits_cw(c)\widetilde{L}_c\in\mathcal{M}_2$.
\begin{figure}[h]
\centering
\includegraphics{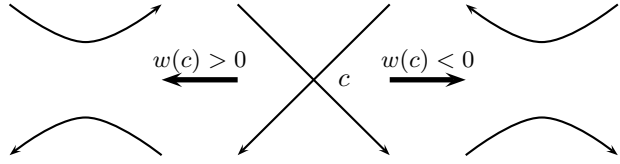}\\
\caption{The orientation of $\widetilde{K}_c$}\label{figure14}
\end{figure}

\begin{theorem}
$\widetilde{\mathcal{F}}(\widetilde{K})$ and $\widetilde{\mathcal{L}}(\widetilde{K})$ are both flat virtual knot invariants.
\end{theorem}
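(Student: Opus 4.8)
The plan is to transplant the proofs of Theorem~\ref{theorem3.1} and Theorem~\ref{theorem3.2} from the virtual category into the flat category, and then feed in the writhe behaviour already established in Lemma~\ref{lemma5.2}. Concretely, I would first argue that the assignment sending a flat crossing $c$ to the oriented flat virtual knot $\widetilde{K}_c$ (respectively to the oriented $2$-component flat virtual link $\widetilde{L}_c$) obeys the flat analogue of the chord index axioms of Definition~\ref{axiom}, and then combine this with the behaviour of the weights $w(c)$ under the flat moves. Since $\widetilde{\mathcal{F}}(\widetilde{K})=\sum_c w(c)\widetilde{K}_c$ and $\widetilde{\mathcal{L}}(\widetilde{K})=\sum_c w(c)\widetilde{L}_c$ are weighted sums over the flat crossings, invariance would then follow move by move once both ingredients are in place.

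Second, I would run through the flat generalized Reidemeister moves. For $F\Omega_1$ the crucial observation is that the participating flat crossing has $\mathrm{Ind}(c)=0$, hence $w(c)=\mathrm{sgn}(\mathrm{Ind}(c))=0$ (this is precisely the writhe-zero statement recorded in Lemma~\ref{lemma5.2}), so its term drops out of the sum entirely; this is exactly why $\widetilde{\mathcal{F}}$ needs no correction term of the type $-w(K)F(K)$ that was forced on $\mathcal{F}(K)$ in Theorem~\ref{theorem3.2}. For $F\Omega_2$ the two crossings $c_1,c_2$ carry equal chord indices $\widetilde{K}_{c_1}=\widetilde{K}_{c_2}$ (respectively $\widetilde{L}_{c_1}=\widetilde{L}_{c_2}$) by the same resolution pictures as in Theorem~\ref{theorem3.1}, while Lemma~\ref{lemma5.2} gives $w(c_1)=-w(c_2)$; hence $w(c_1)\widetilde{K}_{c_1}+w(c_2)\widetilde{K}_{c_2}=0$ and the two terms cancel. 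For $F\Omega_3$ the three chord indices are preserved crossing by crossing and Lemma~\ref{lemma5.2} preserves each $w(c)$, so the contribution is unchanged; the moves $F\Omega_3^v$, $F\Omega_1'$, $F\Omega_2'$, $F\Omega_3'$ and the effect on any crossing not involved in a move are handled exactly as conditions (4) and (5) in the proof of Theorem~\ref{theorem3.1}, now read as flat moves. Summing up, $\widetilde{\mathcal{F}}$ and $\widetilde{\mathcal{L}}$ are unchanged under every flat generalized Reidemeister move.

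Two points will require genuine care rather than mere transcription. The first is the orientation of $\widetilde{K}_c$: the $0$-smoothing does not respect orientation, so I must check that the orientation convention fixed in Figure~\ref{figure14} is canonical enough to be compatible with all the moves, so that the $F\Omega_2$ cancellation really holds as an identity of \emph{oriented} flat virtual knots. By contrast the $1$-smoothing defining $\widetilde{L}_c$ inherits its orientation coherently from $\widetilde{K}$, so $\widetilde{\mathcal{L}}$ is untroubled on this front. The second, and the main obstacle, is the $F\Omega_3$ step: as already noted in the proof of Lemma~\ref{lemma5.2}, two diagrams related by $F\Omega_3$ may have positive lifts that are \emph{not} related by $\Omega_3$, so I cannot simply import the figures from Theorem~\ref{theorem3.1}. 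Instead I would verify directly, at the flat level and for each local configuration of $F\Omega_3$, that smoothing any one of the three crossings before and after the move yields the same flat virtual knot (respectively link); the weights $w(c)$ are already known to be preserved from Lemma~\ref{lemma5.2}, so it is only this diagrammatic identity of smoothings that remains, and that is where the case analysis will be heaviest.
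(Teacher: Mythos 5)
Your proposal is correct and takes essentially the same route as the paper: the paper's proof likewise combines the writhe behaviour established in Lemma \ref{lemma5.2} with a routine check of the flat versions of Figures \ref{figure5}, \ref{figure6}, \ref{figure8}, \ref{figure9} and \ref{figure10}, and makes the same key observation that a zero-writhe crossing contributes nothing, which disposes of the orientation ambiguity of Figure \ref{figure14}. The two subtleties you flag --- the oriented $F\Omega_2$ cancellation under the Figure \ref{figure14} convention, and the fact that positive lifts of $F\Omega_3$-related flat diagrams need not be $\Omega_3$-related --- are precisely the caveats the paper records (the latter already absorbed into the proof of Lemma \ref{lemma5.2}), so your plan amounts to writing out the details the paper explicitly leaves to the reader.
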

\begin{proof}
The proof of Lemma \ref{lemma5.2} told us the behavior of the writhe of each flat crossing point under the flat generalized Reidemeister moves. Notice that if the writhe of a flat crossing point equals zero, then it has no contribution to $\widetilde{\mathcal{F}}(\widetilde{K})$ or $\widetilde{\mathcal{L}}(\widetilde{K})$, although Figure \ref{figure14} did not tell us how to associate an orientation to $\widetilde{K}_c$ in this case. The rest of the proof is a routine check of the flat versions of Figure \ref{figure5}, Figure \ref{figure6}, Figure \ref{figure8}, Figure \ref{figure9} and Figure \ref{figure10} in Section \ref{section3}. We leave the details to the reader.
\end{proof}

\begin{corollary}
If $w(\widetilde{K})\neq0$, then both $\widetilde{\mathcal{F}}(\widetilde{K})$ and $\widetilde{\mathcal{L}}(\widetilde{K})$ are nontrivial.
\end{corollary}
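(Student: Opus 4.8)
The plan is to produce a single $\mathbf{Z}$-linear functional on each coefficient module which, when evaluated on $\widetilde{\mathcal{F}}(\widetilde{K})$ (resp. $\widetilde{\mathcal{L}}(\widetilde{K})$), returns the previously established invariant $w(\widetilde{K})$. Since $w(\widetilde{K})\neq0$ by hypothesis, such a functional is already nonzero on our element, which forces the element itself to be nonzero. The natural candidate is the augmentation map, i.e. the homomorphism that collapses every generator to $1$.

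Concretely, first I would define $\epsilon_1\colon\mathcal{M}_1\to\mathbf{Z}$ by sending every generator (every oriented flat virtual knot) to $1$ and extending $\mathbf{Z}$-linearly; because $\mathcal{M}_1$ is free on these generators this is well defined irrespective of how the various $\widetilde{K}_c$ happen to coincide or differ. Reading off the definition $\widetilde{\mathcal{F}}(\widetilde{K})=\sum_c w(c)\widetilde{K}_c$, I would then compute $\epsilon_1(\widetilde{\mathcal{F}}(\widetilde{K}))=\sum_c w(c)\,\epsilon_1(\widetilde{K}_c)=\sum_c w(c)=w(\widetilde{K})$, where the sum runs over all flat crossing points of $\widetilde{K}$ and the terms with $w(c)=0$ drop out automatically. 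Since $\epsilon_1(0)=0$ while $\epsilon_1(\widetilde{\mathcal{F}}(\widetilde{K}))=w(\widetilde{K})\neq0$, we conclude $\widetilde{\mathcal{F}}(\widetilde{K})\neq0$. The argument for $\widetilde{\mathcal{L}}(\widetilde{K})$ is word-for-word the same after replacing $\mathcal{M}_1$ by $\mathcal{M}_2$: define $\epsilon_2\colon\mathcal{M}_2\to\mathbf{Z}$ sending each oriented $2$-component flat virtual link generator to $1$, and observe $\epsilon_2(\widetilde{\mathcal{L}}(\widetilde{K}))=\sum_c w(c)=w(\widetilde{K})\neq0$.

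There is essentially no obstacle to overcome here; the one point worth stating carefully is that augmenting deliberately throws away all of the (genuinely hard-to-distinguish) flat knot and link generators, collapsing each to $1$, so the distinctness or indistinctness of the individual $\widetilde{K}_c$ and $\widetilde{L}_c$ plays no role. The nonvanishing is governed purely by the total writhe $w(\widetilde{K})$, which Lemma \ref{lemma5.2} already guarantees is a bona fide flat virtual knot invariant. In this sense the corollary is an immediate structural consequence of the constructions: $w(\widetilde{K})$ is recovered as the augmentation of both enhancements.
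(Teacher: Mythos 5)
Your proof is correct and is precisely the argument the paper leaves implicit: the corollary is stated without proof because applying the augmentation map (summing all coefficients) to $\widetilde{\mathcal{F}}(\widetilde{K})$ or $\widetilde{\mathcal{L}}(\widetilde{K})$ recovers $w(\widetilde{K})$, so nonvanishing writhe forces nonvanishing of both elements. Your careful remarks---that freeness of $\mathcal{M}_1$ and $\mathcal{M}_2$ makes the augmentation well defined regardless of coincidences among the $\widetilde{K}_c$, and that the $w(c)=0$ terms (where the orientation convention of Figure \ref{figure14} is silent) drop out---are exactly the right points to flag.
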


\begin{proposition}
Suppose $\widetilde{\mathcal{F}}(\widetilde{K})=\sum\limits_{i=1}^na_i\widetilde{K_i}$ $(a_i\neq0)$, then $\widetilde{K_i}\neq\widetilde{K}$ for any $1\leq i\leq n$.
\end{proposition}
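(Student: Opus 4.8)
The plan is to use the minimal flat crossing number as a complexity measure and to show that $0$-smoothing strictly decreases it. Write $c(\widetilde{K})$ for the minimal number of flat crossing points among all diagrams representing the flat virtual knot $\widetilde{K}$. This is a flat virtual knot invariant, and in particular two flat virtual knots with different crossing numbers are necessarily inequivalent. The whole argument rests on comparing $\widetilde{K}$ with the terms $\widetilde{K}_c$ through this single invariant.

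Since $\widetilde{\mathcal{F}}$ is an invariant, I may compute $\widetilde{\mathcal{F}}(\widetilde{K})$ from any diagram of $\widetilde{K}$, and I would first fix a diagram $D$ realizing the minimum, so that $D$ has exactly $N:=c(\widetilde{K})$ flat crossing points. For each flat crossing point $c$ of $D$, performing $0$-smoothing deletes $c$ and reconnects the two strands in the orientation-preserving way, producing a diagram of $\widetilde{K}_c$ with exactly $N-1$ flat crossing points (the remaining flat crossings and all virtual crossings are left untouched). Hence $c(\widetilde{K}_c)\le N-1<N=c(\widetilde{K})$, which forces $\widetilde{K}_c\neq\widetilde{K}$ as oriented flat virtual knots; indeed they already differ after forgetting the orientations. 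Note that this conclusion holds for every $c$, so whether or not $w(c)=0$ plays no role.

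From $\widetilde{\mathcal{F}}(\widetilde{K})=\sum_c w(c)\widetilde{K}_c$ I would then collect equal terms. Because $\mathcal{M}_1$ is free on the set of flat virtual knots, the expansion $\widetilde{\mathcal{F}}(\widetilde{K})=\sum_{i=1}^n a_i\widetilde{K_i}$ with $a_i\neq0$ is the unique basis expansion, so each $\widetilde{K_i}$ coincides with one of the $\widetilde{K}_c$. Since every $\widetilde{K}_c$ satisfies $\widetilde{K}_c\neq\widetilde{K}$, the coefficient of the basis element $\widetilde{K}$ in $\widetilde{\mathcal{F}}(\widetilde{K})$ is zero; in particular $\widetilde{K}$ is not among $\widetilde{K_1},\dots,\widetilde{K_n}$, which is exactly the assertion. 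Possible cancellations among the $\widetilde{K}_c$ only remove terms and can never create the term $\widetilde{K}$, so they cause no difficulty.

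The one point genuinely requiring care — and the step I would flag as the crux — is the legitimacy of computing on a preferred diagram. The statement concerns the \emph{intrinsic} basis expansion of the invariant, so it suffices to exhibit a single diagrammatic computation in which the coefficient of $\widetilde{K}$ vanishes, and the minimal diagram supplies precisely this. Everything else, namely that $0$-smoothing removes exactly one flat crossing and that a strictly smaller flat crossing number implies inequivalence, is immediate. I do not expect any hidden obstacle beyond confirming that $D$ may indeed be taken minimal, which is automatic from the definition of $c(\widetilde{K})$.
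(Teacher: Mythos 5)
Your proposal is correct and takes essentially the same approach as the paper: compute $\widetilde{\mathcal{F}}(\widetilde{K})$ on a diagram realizing the minimal flat crossing number, note that each $0$-smoothing yields a diagram with strictly fewer flat crossings, and conclude that no $\widetilde{K}_c$ (hence no $\widetilde{K_i}$) can be equivalent to $\widetilde{K}$. The only cosmetic difference is that the paper disposes of the unknot case separately ($\widetilde{\mathcal{F}}=0$ there), whereas your argument absorbs it vacuously as an empty sum.
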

\begin{proof}
If $\widetilde{K}$ is a unknot, then we have $\widetilde{\mathcal{F}}(\widetilde{K})=0\neq\widetilde{K}$. If $\widetilde{K}$ is nontrivial, we choose a minimal diagram of it, i.e. one of the diagrams which realizes the minimal flat crossing number. According to the definition of $\widetilde{\mathcal{F}}(\widetilde{K})$, each $\widetilde{K_i}$ has strictly fewer flat crossing points, hence it cannot be equivalent to $\widetilde{K}$.
\end{proof}

This proposition says that the map $\widetilde{\mathcal{F}}$ turns an oriented flat virtual knot $K$ into a linear combination of ``simpler" flat virtual knots. Therefore sometimes we can use known nontrivial flat virtual knots to deduce that some more complex flat virtual knots are nontrivial. In particular, if a flat virtual knot $\widetilde{K}$ has flat crossing number $n+2$, then we have $\widetilde{\mathcal{F}}^n(\widetilde{K})=\widetilde{\mathcal{F}}\circ\cdots\circ\widetilde{\mathcal{F}}(\widetilde{K})=0$. Here $\widetilde{\mathcal{F}}$ should be understood as a linear map $\widetilde{\mathcal{F}}:\mathcal{M}_1\rightarrow\mathcal{M}_1$ by extending linearly.

For a given flat virtual knot $\widetilde{K}$, assume $\widetilde{\mathcal{F}}(\widetilde{K})=\sum\limits_{i=1}^na_i\widetilde{K_i}$ $(a_i\neq0)$, now let us focus on these $\{\widetilde{K_i}\}_{1\leq i\leq n}$ for a moment. Let $K, K'$ be two virtual knot diagrams which represent the same virtual knot, and we use $G(K), G(K')$ to denote the corresponding Gauss diagrams of them. If one chooses a Gauss sub-diagram of $G(K)$, say $G(K'')$, i.e. $G(K'')$ is obtained from $G(K)$ by deleting some chords. In general, it is possible that none of the Gauss sub-diagrams of $G(K')$ represents the same virtual knot as $G(K'')$. In order to see this, consider a nontrivial virtual knot and choose a minimal real crossing number diagram of it, say $K'$. By performing the first Reidemeister move we obtain another diagram $K$ with one more real crossing point. It is obvious that $G(K')$ is a Gauss sub-diagram of $G(K)$. However, no sub-diagram of $G(K')$ represents $K'$ since $K'$ is minimal. Nevertheless, if we consider some special Gauss sub-diagram of $G(K)$, for example $G(P_n(K))$ (see Example \ref{3.18}), then for any other Gauss diagram which represents the same virtual knot as $G(K)$, we can always find a sub-diagram of it such that this sub-diagram represents the same virtual knot as $G(P_n(K))$. In other words, $G(P_n(K))$ is an essential sub-diagram of $G(K)$. For flat virtual knots, some similar projections were discussed by Turaev in \cite{Tur2004}. Come back to $\widetilde{\mathcal{F}}(\widetilde{K})=\sum\limits_{i=1}^na_i\widetilde{K_i}$ $(a_i\neq0)$, it is worth noticing that each $G(\widetilde{K_i})$ can be regarded as an essential twisted sub-diagram of $G(\widetilde{K})$. Here the reason why we use the word ``twisted" is that each $G(\widetilde{K_i})$ can be obtained from $G(\widetilde{K})$ by removing a chord and then performing a half-twist on the left or right semicircle. The choice of left side or right side depends on the writhe of the removed chord. Some more essential and (more complicated) twisted sub-diagrams can be obtained by considering $\widetilde{\mathcal{F}}^n$.

\begin{example}\label{5.8}
Recall the definition of $\widetilde{K_{p,q}}$ in Example \ref{5.3}. Let $p=n\geq3$ and $q=1$, one obtains
\begin{center}
$\widetilde{\mathcal{F}}(\widetilde{K_{n,1}})=
\begin{cases}
U-\sum\limits_{i=1}^k\widetilde{K_{2i-1,1}}-\sum\limits_{i=1}^k\widetilde{K_{1,2i-1}}=-\sum\limits_{i=2}^k\widetilde{K_{2i-1,1}}-\sum\limits_{i=2}^k\widetilde{K_{1,2i-1}}-U& \text{if }n=2k\\
U-\sum\limits_{i=0}^k\widetilde{K_{2i,1}}-\sum\limits_{i=1}^k\widetilde{K_{1,2i}}=-\sum\limits_{i=1}^k\widetilde{K_{2i,1}}-\sum\limits_{i=1}^k\widetilde{K_{1,2i}}& \text{if }n=2k+1
\end{cases}$
\end{center}
It is worth pointing out that in this example we have $\widetilde{\mathcal{F}}^2(\widetilde{K_{n,1}})=0$.
\end{example}

One can easily extend these two invariants from flat virtual knots to (ordered) flat virtual links with a little modifications on the definition of $\mathcal{M}_i$ $(i=1,2)$. Actually, the index of a self-flat crossing point can be defined similarly as above. For a flat crossing point between different components, one can assign a sign to it in a similar way as what we did when we introduce the flat linking number in Example \ref{3.11}. On the other hand, if $w(c)$ is replaced by Ind$(c)$ then one obtains two other flat virtual knot invariants $\widetilde{\mathfrak{F}}(\widetilde{K})=\sum\limits_c\text{Ind}(c)\widetilde{K}_c$ and $\widetilde{\mathfrak{L}}(\widetilde{K})=\sum\limits_c\text{Ind}(c)\widetilde{L}_c$. Even if we always have Ind$(\widetilde{K})=\sum\limits_c\text{Ind}(c)=0$, from Example \ref{5.8} we see that $\widetilde{\mathfrak{F}}(\widetilde{K_{n,1}})\neq0$ if $n\geq3$. The example below suggests that $\widetilde{\mathfrak{L}}(\widetilde{K})$ can be nontrivial when $n=2$.
\begin{figure}[h]
\centering
\includegraphics{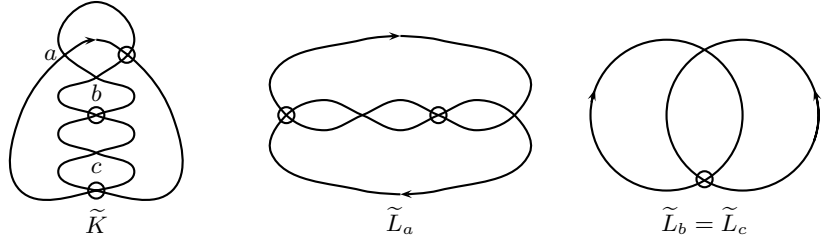}\\
\caption{A nontrivial flat virtual knot $\widetilde{K}$}\label{figure13}
\end{figure}
\begin{example}
Consider the flat virtual knot $\widetilde{K}$ ($=\widetilde{K_{2,1}}$ in the sense of Example \ref{5.3}) in Figure \ref{figure13}. Denote the three flat crossing points by $a, b, c$, respectively. Replacing them with positive crossing points one finds that Ind$(a)=2$, Ind$(b)= \text{Ind}(c)=-1$. Therefore
\begin{center}
$\widetilde{\mathfrak{L}}(\widetilde{K})=2\widetilde{L}_a-\widetilde{L}_b-\widetilde{L}_c=2\widetilde{L}_a-2\widetilde{L}_b$,
\end{center}
since here we have $\widetilde{L}_b=\widetilde{L}_c$. By simply calculating the flat linking number we find that $lk(\widetilde{L}_a)=2$ but $lk(\widetilde{L}_b)=1$. Hence $\widetilde{\mathfrak{L}}(\widetilde{K})\neq0$.
\end{example}

Comparing with $\widetilde{\mathfrak{L}}(\widetilde{K})$, which actually used the notion of chord index for two times, sometimes $\widetilde{\mathcal{L}}(\widetilde{K})$ can tell us some immediate information of the flat virtual knot $\widetilde{K}$. For instance, if $\widetilde{\mathcal{L}}(\widetilde{K})=\sum\limits_{j=1}^nb_j\widetilde{K_j}$, then obviously $\sum\limits_{j=1}^n|b_j|$ provides us a lower bound for the number of flat crossing points of $\widetilde{K}$. Consider the example above, we have $\widetilde{\mathcal{L}}(\widetilde{K})=\widetilde{L}_a-2\widetilde{L}_b$. Thus, the flat crossing number of $\widetilde{K}$ is exactly three. Surely, this also can be concluded from the fact that there is no flat virtual knot with flat crossing number two.

\begin{proposition}
Let $\widetilde{K}$ be a flat virtual knot diagram and $r(\widetilde{K})$ the diagram obtained from $\widetilde{K}$ by reversing the orientation, then $\widetilde{\mathcal{F}}(r(\widetilde{K}))=-r(\widetilde{\mathcal{F}}(\widetilde{K}))$ and $\widetilde{\mathcal{L}}(r(\widetilde{K}))=-r(\widetilde{\mathcal{L}}(\widetilde{K}))$.
\end{proposition}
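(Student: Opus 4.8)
The plan is to reduce both identities to two transformation rules for the summands, since each of $\widetilde{\mathcal{F}}(\widetilde{K})$ and $\widetilde{\mathcal{L}}(\widetilde{K})$ is a sum of the form $\sum_c w(c)X_c$ running over the (orientation-independent) set of flat crossings, with $X_c=\widetilde{K}_c$ or $X_c=\widetilde{L}_c$. First I would establish the two claims: (i) reversing the orientation of $\widetilde{K}$ negates the writhe of every flat crossing, $w_{r(\widetilde{K})}(c)=-w_{\widetilde{K}}(c)$; and (ii) it reverses the orientation of each smoothed object, i.e.\ the smoothing of $r(\widetilde{K})$ at $c$ equals $r(\widetilde{K}_c)$, and likewise for $\widetilde{L}_c$. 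Granting these, the conclusion is a one-line linear computation: since $w(c)\mapsto -w(c)$ and $\widetilde{K}_c\mapsto r(\widetilde{K}_c)$,
\[
\widetilde{\mathcal{F}}(r(\widetilde{K}))=\sum_c(-w(c))\,r(\widetilde{K}_c)=-r\Bigl(\sum_c w(c)\,\widetilde{K}_c\Bigr)=-r\bigl(\widetilde{\mathcal{F}}(\widetilde{K})\bigr),
\]
where $r$ is extended linearly to $\mathcal{M}_1$; the identical computation with $\widetilde{L}_c$ in place of $\widetilde{K}_c$ gives $\widetilde{\mathcal{L}}(r(\widetilde{K}))=-r(\widetilde{\mathcal{L}}(\widetilde{K}))$ in $\mathcal{M}_2$.

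For claim (i), recall that $w(c)=\text{sgn}(\text{Ind}(c))$ is computed in the positive virtual diagram $K^{+}$ lying over $\widetilde{K}$. Reversing the orientation of $\widetilde{K}$ preserves positivity of each crossing (reversing both strands fixes the sign), so the positive diagram over $r(\widetilde{K})$ is precisely $r(K^{+})$. I would then argue that $\text{Ind}(c)$ negates: in the $1$-smoothing $L_c=K_1\cup K_2$ defining the index (Example \ref{3.9}), every crossing retains its writhe under reversal, but the ordered pair $(K_1,K_2)$ gets swapped, because the left/right convention assigning $K_1$ and $K_2$ is orientation-dependent and reversing both strands interchanges the two sides. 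Swapping $K_1$ and $K_2$ exchanges the sets $C_{12}$ and $C_{21}$, so $\text{Ind}_{r(\widetilde{K})}(c)=\sum_{C_{21}}w-\sum_{C_{12}}w=-\text{Ind}_{\widetilde{K}}(c)$, whence $w$ negates. Equivalently, this is the flat-knot shadow of the familiar identity $W_{r(K)}(t)=W_K(t^{-1})$.

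For claim (ii), the underlying unoriented smoothing at $c$ is a purely geometric operation and is therefore the same for $\widetilde{K}$ and $r(\widetilde{K})$; only the orientations placed on the result differ. For $\widetilde{L}_c$ the oriented ($1$-)smoothing transports the reversed orientation to both components, giving $r(\widetilde{L}_c)$. For $\widetilde{K}_c$ the orientation is fixed by the convention of Figure \ref{figure14}, which is read off from the orientations of the two strands at $c$; reversing these reverses the assigned orientation, so the $0$-smoothing of $r(\widetilde{K})$ at $c$ is $r(\widetilde{K}_c)$. One should note that when $\text{Ind}(c)=0$, i.e.\ $w(c)=0$, the corresponding term vanishes on both sides, so the fact that Figure \ref{figure14} leaves the orientation of $\widetilde{K}_c$ undetermined in that case causes no trouble.

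I expect the main obstacle to be the careful verification in claim (i) that the ordering of the two components genuinely swaps under reversal, so that $\text{Ind}(c)$ is negated rather than fixed; this hinges on matching the left/right convention of Example \ref{3.9} against the oriented local picture, and is where I would be most careful. The remaining steps are routine diagrammatic bookkeeping and the linearity of $r$.
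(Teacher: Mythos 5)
Your proposal is correct and follows essentially the same route as the paper: the paper's (much terser) proof consists precisely of your claims (i) and (ii), namely that reversing the orientation negates $\mathrm{Ind}(c)$ (hence $w(c)$) at every flat crossing and reverses the orientation of $\widetilde{K}_c$ and of both components of $\widetilde{L}_c$, after which the identity follows by linearity. Your added verification that the component labels $K_1,K_2$ swap (exchanging $C_{12}$ and $C_{21}$) and your handling of the $w(c)=0$ case simply supply details the paper leaves implicit.
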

\begin{proof}
Notice that when the orientation is reversed, the index of each crossing point turns to its opposite. Furthermore, for each flat crossing point $c$ the orientation of $\widetilde{K}_c$ is reversed when the orientation of $\widetilde{K}$ is reversed. For $\widetilde{L}_c$, the orientations of the two components are both changed.
\end{proof}

\section*{Acknowledgement}
The authors are supported by NSFC 11771042 and NSFC 11571038.

\end{document}